    \newtheorem{thm}{Theorem}[section] \newtheorem{cor}[thm]{Corollary}
    \newtheorem{lem}[thm]{Lemma}  \newtheorem{prop}[thm]{Proposition}
    \newtheorem {conj}[thm]{Conjecture} \newtheorem{defn}[thm]{Definition}
   \newtheorem {example}[thm]{Example}
    \numberwithin{equation}{section}
\theoremstyle{remark}
\newtheorem*{remark}{Remark}
\newcommand{\la}{\lambda}
\newcommand{\ga}{\gamma}
\newcommand{\floor}[1]{\left\lfloor #1 \right\rfloor}
\DeclareMathOperator{\Vol}{Vol}
\DeclareMathOperator{\MinVol}{\operatorname{MinVol}}
\newcommand{\pa}{\partial}
\newcommand{\op}{\operatorname}
\newcommand{\abs}[1]{\left| #1 \right|}
\newcommand{\norm}[1]{\left\| #1 \right\|}
\newcommand{\set}[1]{\left\{ #1 \right\} }
\newcommand{\grad}{\nabla}
\newcommand{\til}{\widetilde}
\DeclareMathOperator{\Jac}{Jac}
\newcommand{\T}{{\bf T}}
\newcommand{\mc}{\mathcal}
\renewcommand{\bar}{\overline}
\begin{document}

\title{Some remarks on the simplicial volume of nonpositively curved manifolds}

\author{Chris Connell}

\address{Department of Mathematics,
Indiana University, Bloomington, IN 47405, USA}
\email{connell@indiana.edu}

\author{Shi Wang}
\address{Max Planck Institute for Mathematics, Bonn, Germany}
\email{shiwang.math@gmail.com}

\subjclass[2010]{Primary 53C23; Secondary 57R19}

\date{}

\begin{abstract}
We show that any closed manifold with a metric of nonpositive curvature that admits either a single point rank condition or a single point curvature condition has positive simplicial volume. We use this to provide a differential geometric proof of a conjecture of Gromov in dimension three.
  \end{abstract}

\maketitle

\section{Introduction and Results}

The vector space $C_i(X; \mathbb R)$ of singular $i$-chains of a topological
space $X$ comes equipped with a natural choice of basis consisting of the set of
all continuous maps from the $i$-dimensional Euclidean simplex into $X$. The
$\ell^1$-norm on $C_i(X; \mathbb R)$ associated to this basis descends to a
semi-norm $\norm{\cdot }_1$ on the singular homology $H_i(X; \mathbb R)$ by
taking the infimum of the norm within each equivalence class. The {\em
simplicial volume,} written $\norm{M}$, of a closed oriented $n$-manifold $M$ is
defined to be $\norm{[M]}_1$, where $[M]$ is the fundamental class in $H_n(M;
\mathbb R)$. More concretely,
\[
\norm{M}=\inf \set{\sum_i \abs{a_i}\,:\, \left[\sum_i a_i \sigma_i\right]=[M]\in H_n(M,\mathbb R) },
\]
where the infimum is taken over all singular cycles with real coefficients
representing the fundamental class in the top homology group of $M$.  This invariant is multiplicative under covers, so the definition can be extended to closed non-orientable manifolds as well.

This invariant was first introduced by Thurston (\cite[Chapter
6]{Thurston77}) and soon after expanded upon by  Gromov (\cite{Gromov82}). This topological invariant measures how efficiently the fundamental class of $M$ can be represented by real singular cycles. The simplicial volume relates to other important geometrically defined topological invariants such as the {\em minimal volume} invariant defined by,
\[
\MinVol(M)=\set{\Vol(M,g)\,:\, -1\leq K_g\leq 1}.
\]
Here $K_g$ denotes the sectional curvatures of the metric $g$. We always have $\MinVol(M)\geq C\norm{M}$ for some universal constant $C>0$ depending only on dimension (\cite{Gromov82}).

The minimal volume invariant, and hence simplicial volume, of $M$ vanishes if $M$ admits a nondegenerate circle action, or more generally a polarized $\mc{F}$-structure (\cite{Fukaya87a,Cheeger86a,Cheeger90}). On the other hand, positivity of simplicial volume has been established for only a few special classes of manifolds including higher genus surface-by-surface bundles (\cite{Hoster01}), negatively curved manifolds (\cite{Gromov82}), more generally manifolds with hyperbolic fundamental group (\cite{Mineyev01}) including visibility manifolds of nonpositive curvature (\cite{Cao95}), certain generalized graph manifolds (\cite{Kuessner04,Connell17}), and most closed locally symmetric spaces of higher rank  and some noncompact finite volume ones as well (\cite{Lafont06b,Bucher-Karlsson07a,Kim12,Loeh09}). 

The relationship between curvature and simplicial volume is delicate. Gromov \cite{Gromov82} showed that closed manifolds $M$ with amenable fundamental group have $\norm{M}=0$. This includes manifolds with finite fundamental groups such as those with positive Ricci curvatures. Moreover, Cheeger and Gromoll \cite{Cheeger-Gromoll} showed that any manifold $M$ of nonnegative Ricci curvature has fundamental group which is a finite extension of a crystallographic group. In particular, $\pi_1(M)$ is amenable and $\norm{M}=0$. On the other hand, Lohkamp \cite{Lokhamp94} showed that any manifold of dimension at least three admits a metric of negative Ricci curvature, and hence there can be no relation to simplicial volume. Restricting our attention to nonpositively curved manifolds, Gromov (\cite{Savage82} and see also \cite[p.11]{Gromov82}) conjectured the following:
\begin{conj}[Gromov]\label{conj:Gromov}
Any closed manifold of nonpositive sectional curvature and negative Ricci curvature has $\norm{M}>0$.
\end{conj}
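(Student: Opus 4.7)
The plan is to reduce Gromov's conjecture to cases where positivity of simplicial volume is already established, via the de Rham decomposition of the universal cover together with rank rigidity, and then to handle the residual rank-one case by a bounded straightening argument driven by the strict negativity of the Ricci curvature.

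First, I would pass to the universal cover $\widetilde{M}$. By the de Rham decomposition theorem, $\widetilde{M}$ splits isometrically as $\R^k \times N_1 \times \cdots \times N_\ell$ with each $N_i$ an irreducible nonpositively curved Hadamard manifold and no further Euclidean factor. The strict negativity of $\op{Ric}$ immediately forces $k=0$, since along any direction tangent to a Euclidean factor the Ricci curvature vanishes identically. With the Euclidean factor absent, the theorem of Gromoll--Wolf and Lawson--Yau shows that $M$ admits a finite cover which is a metric product of closed irreducible factors $M_1 \times \cdots \times M_\ell$ with $\widetilde{M_i} = N_i$. By the multiplicative inequality $\norm{M_1 \times M_2} \geq c\,\norm{M_1}\,\norm{M_2}$, and since simplicial volume is multiplicative under finite covers, it suffices to prove the conjecture under the additional assumption that $\widetilde{M}$ is irreducible. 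The Ricci tensor restricted to each factor of an orthogonal product remains negative, so this reduction preserves the hypothesis.

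For an irreducible closed nonpositively curved manifold, the Ballmann--Burns--Spatzier rank rigidity theorem gives a dichotomy: either $\widetilde{M}$ is a symmetric space of noncompact type of rank $\geq 2$, or $\widetilde{M}$ has rank one. In the symmetric case $M$ is a higher rank locally symmetric space, for which positivity of simplicial volume is known by the results already cited in the introduction (Lafont--Schmidt, Bucher--Karlsson, Kim, and L\"oh--Sauer, among others), so the conclusion holds. In the rank-one case, my strategy is to construct a bounded straightening of singular simplices, modeled on the barycenter method of Besson--Courtois--Gallot adapted to non-locally-symmetric rank-one CAT(0) targets, using strict negativity of the Ricci tensor at a point to control the Hessian of the Busemann barycenter and thereby bound the Jacobian of straightened top-dimensional simplices. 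Pairing the resulting bounded top cocycle with the fundamental class would then yield $\norm{M} > 0$.

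The principal obstacle is precisely the rank-one but not strictly negatively curved regime, where one cannot directly apply Gromov's proof for $K<0$. Even with $\op{Ric} < 0$, individual sectional curvatures may vanish on large subsets, so the volume of a straightened simplex is not obviously uniformly bounded, and the standard smearing / straightening techniques lose control. Making precise how a single point of strict negative curvature propagates to a nondegenerate bounded cocycle on $M$---essentially, upgrading a ``single point rank/curvature condition'' of the kind announced in the abstract to a global bounded-cohomological invariant detecting the fundamental class---is the technical heart of the argument. Carrying this out in arbitrary dimension, rather than only in dimension three where restrictions on the algebraic shape of the curvature tensor are very strong, is where I expect the main difficulty to lie.
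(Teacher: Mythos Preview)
The statement you are addressing is a conjecture that the paper does \emph{not} prove in general; only the three-dimensional case is established (Theorem~\ref{thm:dim3}). Your proposal is likewise not a proof but a reduction: the de~Rham/Gromoll--Wolf/rank-rigidity steps are standard and correctly reduce the problem to the closed irreducible rank-one case, but you then explicitly concede that the rank-one case ``is where I expect the main difficulty to lie.'' That is the genuine gap. Negative Ricci curvature on a closed rank-one nonpositively curved manifold is not known to yield a uniform Jacobian bound for barycentrically straightened simplices, and your sketch supplies no mechanism for producing one. As the paper's concluding section makes explicit, there may exist rank-one manifolds with negative Ricci curvature in which every point carries a higher-$\op{rank}^+$ vector; at such points the Hessian of each Busemann function has a two-dimensional kernel, and the Besson--Courtois--Gallot determinant inequality gives nothing.

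The paper's partial results take a different route, bypassing the product decomposition entirely. Theorem~\ref{thm:general estimate} and Theorem~\ref{thm:pointwise-k-ricci} replace the global Ricci hypothesis by a stronger \emph{pointwise} condition at a single point $x$ (every $v\in T^1_xM$ is $\op{rank}^+$ one, or a $(\floor{n/4}+1)$-Ricci bound at $x$), which directly forces a Jacobian bound on the preimage of a neighborhood of $x$ and gives $\norm{M}>0$ via Theorem~\ref{thm:straightening}. The three-dimensional case of the conjecture is then obtained by a Bochner argument, specific to $n=3$, showing that negative Ricci forbids $u(x)=\inf_v \op{Tr}_2 DdB_v$ from vanishing identically; this step has no known higher-dimensional analogue. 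Your outline and the paper thus agree that the irreducible rank-one situation is the crux, but neither resolves it beyond dimension three.
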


For the statement of next theorem we need the following definition.
\begin{defn}\label{def:k-th trace} For any positive (resp. negative) semi-definite linear endomorphism $A:\:V\rightarrow V$ of an $n$-dimensional vector space $V$,
and for any $k=1,2,...,n$, we define the $k$-th trace of $A$, denoted by
$\op{Tr}_k(A)$, to be the sum of the $k$ eigenvalues of $A$ closest to $0$.

In other words,
\[
\op{Tr}_k(A)=\inf_{V_k\subset V}\op{Tr}(A|_{V_k})\quad\left(\text{resp. }\op{Tr}_k(A)=\sup_{V_k\subset V}\op{Tr}(A|_{V_k})\right),
\]
where $V_k$ is a $k$-dimensional subspace (not necessarily invariant under $A$) of $V$, and $A|_{V_k}$ is the restriction of $A$ to $V_k\times V_k$, viewed as a bilinear form. Hence $\op{Tr}(A|_{V_k})=\sum_{i=1}^k A(e_i,e_i)$ for any basis of $V_k$. 
\end{defn}

In what follows we denote the covariant Hessian of the Busemann function determined by $v\in T^1M$ by $DdB_v$. 

\begin{thm}\label{thm:general estimate} If $M$ is a closed nonpositively curved manifold of dimension $n\geq 3$, then the simplicial volume has a lower bound
$$||M||\geq \frac{2(n-1)^n}{n^{n/2}\omega_n}\int_M u^n(x)dV$$
where $\omega_n$ is the volume of the unit round $n$-sphere, and $u(x)=\inf_{v\in T_x^1M}\op{Tr}_2DdB_v$.
\end{thm}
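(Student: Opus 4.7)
The plan is to adapt the barycentric straightening method of Besson--Courtois--Gallot to the nonpositively curved setting, producing a pointwise Jacobian estimate tied to the Hessian of the Busemann function.

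First I would define a straightening. For each singular simplex $\sigma:\Delta^n\to M$, lift to $\widetilde\sigma:\Delta^n\to\widetilde M$, and equip each vertex $p_i=\widetilde\sigma(e_i)$ with a fixed probability measure $\mu_i$ on the unit tangent sphere $T^1_{p_i}\widetilde M$ (for instance, the normalized round measure). For $t=(t_0,\ldots,t_n)\in\Delta^n$, define $\mathrm{str}(\sigma)(t)$ to be the unique minimizer of the strictly convex function $x\mapsto\sum_i t_i\int B_v(x)\,d\mu_i(v)$; strict convexity (hence existence and uniqueness of the minimizer) follows from nonpositive curvature. After pushing down to $M$ and verifying equivariance, this produces a chain map $\mathrm{str}:C_\ast(M;\mathbb R)\to C_\ast(M;\mathbb R)$ chain-homotopic to the identity, so $\mathrm{str}$ preserves the fundamental class.

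Second, I would establish the Jacobian estimate. Implicit differentiation of the first-order condition $\sum_i t_i\int\nabla B_v(\mathrm{str}(\sigma)(t))\,d\mu_i(v)=0$ expresses $d_t\,\mathrm{str}(\sigma)$ in terms of the symmetric positive semidefinite matrix
\[
A(x,t)\;:=\;\sum_i t_i\int DdB_v(x)\,d\mu_i(v).
\]
By Ky Fan's maximum principle, the sum of the two smallest eigenvalues $\op{Tr}_2$ is concave on positive semidefinite matrices, so $\op{Tr}_2(A(x,t))\geq u(x)$ pointwise. Combined with a Cauchy--Schwarz / AM--GM analysis applied to each of the $n$ directional derivatives of $\mathrm{str}(\sigma)$, this yields an estimate of the form
\[
|\Jac_t\,\mathrm{str}(\sigma)|\cdot u(\mathrm{str}(\sigma)(t))^n\;\leq\;\frac{n^{n/2}}{(n-1)^n}\cdot J(t),
\]
where $J(t)$ is a weight on $\Delta^n$ whose total integral is bounded by the volume $\omega_n/2$ of a hemispherical $n$-simplex in $S^n$.

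Finally, given any fundamental cycle $c=\sum_i a_i\sigma_i$ with $\sum|a_i|\leq\|M\|+\varepsilon$, the straightened cycle $\sum_i a_i\,\mathrm{str}(\sigma_i)$ still represents $[M]$, so
\[
\int_M u^n\,dV\;\leq\;\sum_i|a_i|\int_{\Delta^n}u(\mathrm{str}(\sigma_i)(t))^n\,|\Jac_t\,\mathrm{str}(\sigma_i)|\,dt\;\leq\;(\|M\|+\varepsilon)\cdot\frac{n^{n/2}\,\omega_n}{2(n-1)^n},
\]
and the conclusion follows by letting $\varepsilon\to 0$. The hard step is the Jacobian estimate: precisely the $\op{Tr}_2$ factor (rather than the full trace or $\op{Tr}_1$) is essential, since each Busemann Hessian $DdB_v$ has the radial direction $v$ in its kernel, making any naive bound depending on $\det A$ or $\op{Tr} A$ either vacuous or too weak. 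The estimate requires pairing each directional derivative of $\mathrm{str}(\sigma)$ with a two-dimensional transverse plane and exploiting Ky Fan concavity; the precise constants $(n-1)^n$, $n^{n/2}$, and $\omega_n/2$ will drop out of optimizing this computation, with the $\omega_n/2$ factor coming from the maximal volume of a spherical simplex on $S^n$.
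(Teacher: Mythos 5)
Your overall strategy (barycentric straightening plus a Jacobian bound) matches the paper, but several of the key technical steps as you describe them would not go through.

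First, the claim that strict convexity of $x\mapsto\sum_i t_i\int B_v(x)\,d\mu_i(v)$ ``follows from nonpositive curvature'' is false: in nonpositive curvature $DdB_v$ is only positive semidefinite and can have a large kernel (e.g.\ in the flat factor of a product), so the integrated Hessian need not be definite and a minimizer need not exist or be unique. The paper first observes that $u\not\equiv0$ forces $M$ to be rank one (Lemma~\ref{lem:tr2-implies-rank1}), and then uses Knieper's theory to get measures that are fully supported on $\partial\widetilde M$, atom-free, and assign zero mass to $\{\theta:\lambda^-_\theta=0\}$; both strict convexity and existence of the barycenter rest on these facts (Lemma~\ref{lem:strictly convex}). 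With the round measure on $T^1_{p_i}\widetilde M$ pushed to $\partial\widetilde M$, you would retain full support and $\Gamma$-equivariance, but there is no obvious reason the degenerate boundary set has measure zero, so properness of the functional — and hence existence of the barycenter — is genuinely in doubt. This is why the paper works with Patterson--Sullivan measures.

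Second, the Jacobian estimate as sketched does not lead to the stated constant. The implicit differentiation produces two quadratic forms, not one: $K=\int DdB\,d\nu$ \emph{and} $H=\int (dB)^2\,d\nu$, and the needed inequality $|\Jac|\le 2^n\det(H)^{1/2}/\det(K)$ comes from a two-step Cauchy--Schwarz (inequality~(\ref{eqn:Q1-bounds-Q2})) that is specifically enabled by parametrizing the source simplex as the spherical orthant $\Delta_s^n$ with $\sum a_i^2=1$, so that differentiating $a_i^2$ produces $2a_i$ and $\sum\langle u,e_i\rangle^2=1$. Your affine parametrization discards this device. Moreover, the concavity of $\op{Tr}_2$ (Ky Fan) only gives $\op{Tr}_2(K)\ge u(x)$, which bounds one eigenvalue of $K$; it does not control $\det K$ nor relate it to $\det H$. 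What the paper actually uses is the \emph{pointwise} matrix inequality $(dB_{(x,\theta)})^2+\tfrac{1}{u(x)}DdB_{(x,\theta)}\ge I$ for each $\theta$ (which follows because the restriction of $DdB_{(x,\theta)}$ to $v_{x,\theta}^\perp$ is $\ge u(x)I$), integrates it to $H+\tfrac{1}{u(x)}K\ge I$, notes $\op{tr}(H)=1$, and only then applies the Besson--Courtois--Gallot determinant lemma $\det(H)^{1/2}/\det(K)\le n^{n/2}/(n-1)^n$. Without the $H$ matrix and the trace normalization this lemma is not available, and the $\op{Tr}_2$ concavity cannot substitute for it. (As a smaller point, the factor $\omega_n/2$ is not the volume of a hemispherical simplex; it arises as $2^n\cdot\vol(\Delta_s^n)=2^n\cdot\omega_n/2^{n+1}$, where $\Delta_s^n$ is the positive orthant of $S^n$.)
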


\begin{remark} Note that the theorem holds trivially when $u(x)\equiv 0$, and $u(x)>0$ if and only if every vector in $T_x^1M$ is $\op{rank}^+$ one (see Definition \ref{def:rank} and Lemma \ref{lem:tr2-implies-rank1} below).
\end{remark}

\begin{example}
	\begin{figure}[ht]
		\centering
		\includegraphics[width=0.7\linewidth]{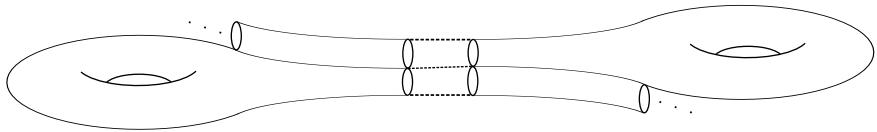}
		\caption{Gromov's example of a rank one graph manifold.}
		\label{fig:gromovs-example}
	\end{figure}
The example of Figure \ref{fig:gromovs-example} consists of two hyperbolic surfaces $\Sigma_1$ and $\Sigma_2$ with one puncture each such that the cusps have been truncated and smoothly and symmetrically tapered to a flat metric in a neighborhood of their round circle boundaries $\pa \Sigma_i$. We form $M$ by gluing $\Sigma_1\times \pa \Sigma_2$ to $\pa \Sigma_1 \times \Sigma_2$ by the identity isometry along the flat boundary torus $\T^2=\pa \Sigma_1 \times \pa \Sigma_2$, thus switching the surface and circle factors. 

This manifold has an obvious rank one polarized $\mc{F}$-structure formed by the local circle bundles, and hence $\norm{M}=0$. In this case, $u(x)=0$ for all $x$. Indeed, if $v$ is tangent to the circle direction at a point $x$, then $DdB_v(w)=0$ for all $w\in v^\perp$ and hence $\op{Tr}_2 DdB_v=0$. 
\end{example}

\begin{cor}\label{cor:positive-under-rank-condition} Let $M$ be a closed nonpositively curved manifold. If there exists a point $x\in M$ so that every vector in $T_x^1M$ is $\op{rank}^+$ one, then the simplicial volume $||M||>0$.
\end{cor}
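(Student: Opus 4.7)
The plan is to derive this corollary directly from Theorem \ref{thm:general estimate} by promoting the pointwise hypothesis at $x_0$ to an integral lower bound. The outline has three stages: upgrade the $\op{rank}^+$ one condition at $x_0$ to strict positivity of $u(x_0)$, extend this positivity to an open neighborhood via semi-continuity of $u$, and then integrate and invoke the main theorem.

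First I would use Lemma \ref{lem:tr2-implies-rank1} (cited in the remark following Theorem \ref{thm:general estimate}) to conclude that $\op{Tr}_2 DdB_v > 0$ for every $v \in T_{x_0}^1 M$, since by hypothesis every such $v$ is $\op{rank}^+$ one. The map $v \mapsto \op{Tr}_2 DdB_v$ is continuous on $T_{x_0}^1 M$: Busemann functions are $C^2$ on a Hadamard manifold and depend continuously in $C^2_{\mathrm{loc}}$ on their defining direction, and $\op{Tr}_2$ is itself continuous in its matrix argument as an infimum over the compact Grassmannian of $2$-planes. Since $T_{x_0}^1 M$ is compact, the infimum $u(x_0)$ is attained and strictly positive.

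Next, I would establish lower semi-continuity of $u$ at $x_0$. Joint continuity of $(x,v)\mapsto DdB_v$ on the unit tangent bundle $T^1 M$---which again rests on continuous dependence of Busemann functions on base point and direction in nonpositive curvature---implies that $(x,v)\mapsto \op{Tr}_2 DdB_v$ is continuous on $T^1 M$. Taking an infimum over the compact fibers $T_x^1 M$ then yields a continuous (in particular, lower semi-continuous) function $u$ on $M$. Consequently there exists an open neighborhood $U$ of $x_0$ on which $u(x) \geq \tfrac{1}{2} u(x_0) > 0$. Applying Theorem \ref{thm:general estimate} gives
\[
\norm{M} \;\geq\; \frac{2(n-1)^n}{n^{n/2}\omega_n}\int_M u^n(x)\,dV \;\geq\; \frac{2(n-1)^n}{n^{n/2}\omega_n}\left(\frac{u(x_0)}{2}\right)^{\!n}\Vol(U) \;>\; 0.
\]

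The main obstacle is the continuity step: one must know that $v \mapsto DdB_v$, and then the joint map $(x,v)\mapsto DdB_v$, is continuous with enough regularity that the infimum $u(x)$ is lower semi-continuous in $x$. This is a regularity question for Busemann functions on Hadamard manifolds, classical but somewhat subtle; everything else in the argument is a soft consequence of Theorem \ref{thm:general estimate} and the referenced lemma.
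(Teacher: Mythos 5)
Your argument for $n \geq 3$ is correct and is essentially the paper's intended route: the paper declares the corollary an ``immediate consequence'' of Theorem \ref{thm:general estimate}, and the continuity facts you carefully spell out (continuity of $v\mapsto \op{Tr}_2 DdB_v$ on the fiber, then joint continuity of $(x,v)\mapsto DdB_v$ so that $u$ is continuous, hence strictly positive on a neighborhood of $x$ and $\int_M u^n\,dV > 0$) are exactly what the paper is implicitly invoking. The regularity claim you flag---continuous dependence of the horospherical second fundamental form $DdB_v$ on $v\in T^1\til M$---is classical for Hadamard manifolds with bounded curvature (e.g.\ Heintze--Im Hof, via continuity of the stable Jacobi tensor), so that step is sound.

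The one genuine gap: the corollary as stated carries no dimension restriction, whereas Theorem \ref{thm:general estimate} requires $n\geq 3$, so your proof says nothing in dimension $2$. The paper handles that case with a separate one-line remark: a closed surface admitting a nonpositively curved metric is, by Gauss--Bonnet, either flat (torus or Klein bottle) or of genus at least two (or its nonorientable analog). In the flat case every vector has full $\op{rank}^+$, so the rank hypothesis fails; the sphere and projective plane admit no nonpositively curved metric at all; and for the remaining surfaces $\norm{M}=-2\chi(M)>0$. You should append this elementary case to make the argument complete.
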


\begin{cor} Let $M$ be a closed nonpositively curved manifold. If there exists a point $x\in M$ so that every 2-plane in $T_xM$ is negatively curved, then the simplicial volume $||M||>0$.
\end{cor}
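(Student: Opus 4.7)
The plan is to reduce the statement to Corollary \ref{cor:positive-under-rank-condition} by showing that the pointwise negative curvature hypothesis forces every $v\in T_x^1M$ to be $\op{rank}^+$ one. By the remark following Theorem \ref{thm:general estimate}, together with Lemma \ref{lem:tr2-implies-rank1}, this is equivalent to $\op{Tr}_2 DdB_v>0$ for each $v\in T_x^1M$. In nonpositive curvature $DdB_v$ is positive semidefinite with $v$ in its kernel, so $\op{Tr}_2 DdB_v=0$ precisely when there is a nonzero $w\in v^\perp$ with $w\in\ker DdB_v$. I will rule out this possibility.

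So suppose for contradiction that some $v\in T_x^1M$ admits such a $w$, and let $J$ be the stable Jacobi field along $\gamma_v$ with $J(0)=w$. The relation between the Hessian of the Busemann function and the shape operator of the stable horosphere gives $J'(0)=-DdB_v\cdot w=0$. A routine computation using $\langle J'',\dot\gamma\rangle=0$ and $J'(0)=0$, $J(0)\perp v$ shows $J(t)\perp\dot\gamma_v(t)$ for all $t\geq0$. I would then differentiate twice:
\[
\tfrac{d^2}{dt^2}|J(t)|^2=2|J'(t)|^2-2\langle R(J,\dot\gamma)\dot\gamma,J\rangle\geq 0,
\]
where the inequality uses $K\leq 0$. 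Hence $|J|^2$ is convex on $[0,\infty)$ with $(|J|^2)'(0)=2\langle J(0),J'(0)\rangle=0$, so $|J|^2$ is nondecreasing; since $J$ is stable (hence bounded), $|J|^2$ must be constant equal to $|w|^2$.

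Constancy of $|J|^2$ forces both terms on the right side of the displayed equation to vanish identically: $J'(t)\equiv 0$ and $\langle R(J(t),\dot\gamma(t))\dot\gamma(t),J(t)\rangle\equiv 0$. Evaluating at $t=0$ and using $w\perp v$ gives $K(v,w)=0$, contradicting the assumption that every $2$-plane in $T_xM$ is negatively curved. Therefore every $v\in T_x^1M$ is $\op{rank}^+$ one, and Corollary \ref{cor:positive-under-rank-condition} yields $\|M\|>0$.

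The main obstacle in this argument is not so much finding the contradiction as being careful about the sign convention and the precise relationship $J'(0)=-DdB_v\cdot w$ between the stable Jacobi field and the Busemann Hessian; once that identification is made the convexity/stability squeeze is essentially automatic from the Jacobi equation in nonpositive curvature.
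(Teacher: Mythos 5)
Your argument is correct and follows the route the paper intends when it calls the corollary an "immediate consequence" of Theorem \ref{thm:general estimate}: via Lemma \ref{lem:tr2-implies-rank1}, a vanishing $\op{Tr}_2 DdB_v$ produces a stable Jacobi field $J$ with $J(0)=w\perp v$ and $J'(0)=0$, the convexity/stability squeeze forces $J$ to be parallel, and the Jacobi equation then gives $R(w,v)v=0$, i.e.\ a flat $2$-plane at $x$, contradicting the hypothesis. Since you reduce to Corollary \ref{cor:positive-under-rank-condition}, which the paper establishes for all $n$ (with a separate elementary treatment of $n=2$), the dimension caveat is also handled; this is essentially the paper's own proof, just with the parallel-Jacobi-field step spelled out.
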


\begin{remark}
The $n\geq 3$ cases of the above corollaries are immediate consequences of Thereom \ref{thm:general estimate}, and the $n=2$ case holds since the hypotheses on the metric rule out the torus, Klein-bottle, projective plane and sphere.
\end{remark}

In the special case where $M$ is real hyperbolic, $u(x)\equiv 1$ and the simplicial volume $||M||=Vol(M)/\sigma_n$ (\cite{Gromov82, Thurston77}), where $\sigma_n$ is the maximal volume of ideal simplices in $\mathbb H^n$ . 

\begin{remark} 
The above observation immediately yields from Theorem \ref{thm:general estimate} the upper bound,
$$\sigma_n\leq \frac{n^{n/2}\omega_n}{2(n-1)^n}.$$
While this is inferior to the upper bound of $\frac{\pi}{(n-1)!}$ given by Thurston (\cite{Thurston77}, see also \cite{Haagerup-Munkholm}), we achieve it via a completely indirect computation. 
\end{remark}

Using Theorem \ref{thm:general estimate} we provide a purely differential geometric proof of the following 3-dimensional case of Conjecture \ref{conj:Gromov}.
\begin{thm}\label{thm:dim3}

 If $M^3$ admits a nonpostively curved metric with negative Ricci curvature, then the simplicial volume $||M||>0$.

\end{thm}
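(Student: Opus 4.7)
The plan is to apply Corollary \ref{cor:positive-under-rank-condition}: it suffices to exhibit a point $x\in M^3$ at which every $v\in T_x^1 M$ is $\op{rank}^+$ one, i.e., $u(x)>0$. I argue by contradiction, assuming $u\equiv 0$ on $M$. Then at each $x$ there is $v\in T_x^1 M$ and a unit $W\perp v$ with $DdB_v(W,W)=0$; via the identification of $DdB_v$ with the stable Jacobi tensor this produces a stable perpendicular Jacobi field $J$ along $\gamma_v$ with $J(0)=W$ and $J'(0)=0$. In nonpositive curvature, $|J|^2$ is convex on $[0,\infty)$ with vanishing initial derivative, and stability forces $|J|^2$ bounded, hence constant; then $(|J|^2)''=2|J'|^2-2K(J,\dot\gamma_v)|J|^2=0$ forces $J'\equiv 0$ and $K(J,\dot\gamma_v)\equiv 0$ on $\gamma_v^+$. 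In particular $\pi(x):=\op{span}(v,W)$ is a flat $2$-plane at $x$.

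In dimension three under strict negative Ricci this flat plane is uniquely determined. Since $\dim\Lambda^2 T_x M=3$, two distinct flat planes at $x$ would give two linearly independent decomposable $2$-forms in $\ker\wh R_x$ (where $\wh R_x$ denotes the curvature operator); in an eigenbasis $\{e_1,e_2,e_3\}$ two of the three principal sectional curvatures then vanish, forcing $\op{Ric}(e_k)=0$ for the index $k$ common to both vanishing planes, contradicting $\op{Ric}<0$. Thus we obtain a continuous flat $2$-plane distribution $\pi$ on $M$. By the flat strip theorem, the geodesic variation $\alpha(s,t)=\exp_{\gamma_v(t)}(sW(t))$ generates a totally geodesic flat half-strip whose tangent plane at every point is flat and therefore equal to $\pi$; this forces $\dot\gamma_W$ to lie in $\pi$ and yields a parallel Jacobi field along $\gamma_W^+$, so every $v\in\pi(x)\cap T_x^1M$ is $\op{rank}^+\geq 2$ and $\pi$ is a totally geodesic distribution with flat leaves. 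A short computation exploiting totally-geodesic-ness yields $\nabla_X N=0$ for all $X\in\pi$, where $N$ is the unit normal line field to $\pi$.

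The main obstacle is closing the argument to a global contradiction with $\op{Ric}<0$. The totally geodesic flat $2$-plane distribution $\pi$ with $\nabla_\pi N=0$ is highly rigid. A Riccati/Weitzenb\"ock analysis along the normal direction, using $\op{Ric}(N)<0$ together with $\nabla_\pi N=0$, should force $\nabla_N N=0$, making $N$ a parallel unit vector field; by the de Rham decomposition theorem $\til M$ would then split isometrically as $L\times\mathbb R$, which produces a zero Ricci eigenvalue in the $\mathbb R$-direction, contradicting the hypothesis. The delicate step is carrying out the Bochner-type computation establishing $\nabla_N N=0$ purely differential-geometrically, without appealing to geometrization or rank rigidity machinery.
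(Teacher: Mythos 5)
Your setup matches the paper's: assume $u\equiv 0$, produce at each $x$ a unique flat $2$-plane $\pi(x)$ (uniqueness via the $\op{Ric}<0$ hypothesis in dimension $3$), and obtain a normal unit vector field $N=e_3$ (after passing to a double cover if necessary). Up to that point the two arguments agree in substance.

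The gap is precisely where you flag it. You do not carry out the step establishing $\nabla_N N=0$, and the proposed route---$\nabla_\pi N=0$ via integrability and total geodesy of $\pi$, then a ``Riccati/Weitzenb\"ock analysis'' forcing $N$ parallel, then de Rham splitting---is left entirely as a sketch with ``should force.'' In fact it is not clear the intermediate claims survive scrutiny: the flat strip you get from a one--sided parallel Jacobi field along a ray is a delicate object, and neither integrability nor total geodesy of the distribution $\pi$ is established in the paper (nor is it needed). The paper never proves $\nabla_{e_2}e_3=0$ or $\nabla_{e_3}e_3=0$; it allows a priori $\nabla_{e_2}e_3=a_{21}e_1+a_{22}e_2$ and $\nabla_{e_3}e_3=a_{31}e_1+a_{32}e_2$, using only $\nabla_{e_1}e_3=0$, which follows because $e_1,e_2$ are parallel along the $e_1$-geodesic.

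The paper then closes the argument not by de Rham splitting but by the integral Bochner identity
\[
\int_M \op{Ric}(X,X)\,dV=\int_M \bigl((\op{div}X)^2-\op{Tr}(\nabla X\circ\nabla X)\bigr)\,dV
\]
applied to $X=e_3$. Using the structure above one computes pointwise $\op{div}e_3=a_{22}$ and $\op{Tr}(\nabla e_3\circ\nabla e_3)=a_{22}^2$, so the right-hand side vanishes identically, giving $\int_M\op{Ric}(e_3,e_3)\,dV=0$ and contradicting $\op{Ric}<0$. This sidesteps exactly the issue you ran into: one does not need $N$ to be parallel, nor any de Rham decomposition; the global integral identity suffices. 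To complete your proposal you would need to replace the final ``should force $\nabla_N N=0$'' paragraph with this Bochner computation (or genuinely prove the splitting, which is a stronger statement than what is actually needed and is not obviously attainable).
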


For the next theorem we need to introduce the notion of $k$-Ricci curvature.
\begin{defn}\label{def:k-ricci}
	For $M$ nonpositively curved and $u,v\in T_xM$, the $k$-Ricci curvature is given by
	\[
	\op{Ric}_{k}(u,v)=\sup_{\stackrel{V\subset T_xM}{\dim V=k}} \op{Tr}
	R(u,\cdot,v,\cdot)|_V=\op{Tr}_k R(u,\cdot,v,\cdot).
	\]
\end{defn}

The next theorem generalizes Theorem 1 of \cite{CW17}. 
\begin{thm}\label{thm:pointwise-k-ricci}
	Let $M$ be an closed manifold of dimension $n$ admitting a
	Riemannian metric of nonpositive curvature. If there exists $x\in M$, such that any vector $v_x\in T^1_xM$ satisfies $\op{Ric}_{\floor{\frac{n}{4}}+1}(v_x, v_x) < 0$, then the
	simplicial volume $||M||> 0$.
\end{thm}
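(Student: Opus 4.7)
The plan is to apply Theorem~\ref{thm:general estimate}: the pointwise hypothesis at $x$ should force $u$ to be strictly positive on an open neighborhood of $x$, so that $\int_M u^n\,dV>0$ and the conclusion follows.

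First, since $T^1_xM$ is compact and $R$ depends continuously on the base point, the strict inequality $\op{Ric}_{\lfloor n/4\rfloor+1}(v,v)<0$ persists on an open neighborhood $\mathcal{U}\ni x$: at every $y\in\mathcal{U}$ and every $v\in T^1_yM$, $\op{Ric}_{\lfloor n/4\rfloor+1}(v,v)<0$. Unwinding Definition~\ref{def:k-ricci}, and using that the Jacobi operator $w\mapsto R(w,v)v$ is negative semi-definite with $v$ in its kernel, this is equivalent to the pointwise dimension bound
\[
\dim\bigl(\ker R(\cdot,v)v\cap v^\perp\bigr)\leq \lfloor n/4\rfloor-1 \qquad \text{for all }y\in\mathcal{U},\ v\in T^1_yM.
\]

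By the remark after Theorem~\ref{thm:general estimate}, it now suffices to prove $\op{Tr}_2 DdB_v(y)>0$ for every $y\in\mathcal{U}$ and every $v\in T^1_yM$. I argue by contradiction. If $\op{Tr}_2 DdB_v(y)=0$ then, since $v$ always lies in $\ker DdB_v$, there is a unit $w\in v^\perp$ with $DdB_v(w,w)=0$. The Riccati equation $U'+U^2+R_{\dot\gamma}=0$ (for $U=DdB_v$), together with $U\geq 0$ in nonpositive curvature, forces $w$ to extend to a parallel Jacobi field along $\gamma_v$ with $R(w(t),\dot\gamma(t))\dot\gamma(t)\equiv 0$; in particular $w\in \ker R(\cdot,v)v\cap v^\perp$ at $y$.

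So far this yields only a single flat direction, whereas the dimension bound above shows that, to reach a contradiction, I need at least $\lfloor n/4\rfloor$ of them. This amplification step is the technical heart of the proof and, I expect, the main obstacle. Adapting the strategy of Theorem~1 of \cite{CW17}, the idea is to (i) propagate the flat $2$-plane $\mathrm{span}\{v,w\}$ to nearby unit vectors by combining the Riccati analysis above with the symmetries of the curvature tensor (in particular the first Bianchi identity), (ii) iterate this propagation to produce additional mutually flat directions at $y$, and (iii) perform a combinatorial dimension count whose pairing of flat planes produces the factor $\lfloor n/4\rfloor$. With this contradiction in hand, $u(y)>0$ on $\mathcal{U}$ and Theorem~\ref{thm:general estimate} yields
\[
\|M\|\geq \frac{2(n-1)^n}{n^{n/2}\omega_n}\int_{\mathcal{U}}u^n\,dV>0.
\]
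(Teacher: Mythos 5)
Your plan reduces Theorem~\ref{thm:pointwise-k-ricci} to Theorem~\ref{thm:general estimate} by arguing that the pointwise $k$-Ricci condition at $x$ forces $u>0$ on a neighborhood. That reduction is not what the paper does, and more importantly it cannot work in general: the two hypotheses are genuinely incomparable, as the paper itself points out in its concluding remarks. If $\op{Tr}_2 DdB_v(y)=0$ at some $y$ near $x$ and some $v\in T^1_yM$, the Riccati/convexity argument you give produces exactly \emph{one} unit vector $w\perp v$ with $R(w,v,v,w)=0$, i.e.\ $\dim\bigl(\ker R(\cdot,v)v\cap v^\perp\bigr)\geq 1$. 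Your correctly derived bound from $\op{Ric}_{\floor{n/4}+1}(v,v)<0$ is $\dim\bigl(\ker R(\cdot,v)v\cap v^\perp\bigr)\leq \floor{n/4}-1$. For $n\leq 7$ this is a contradiction, but for $n\geq 8$ it is not: a $\op{rank}^+$-two vector with exactly one perpendicular flat direction is perfectly compatible with $\op{Ric}_{\floor{n/4}+1}(v,v)<0$. The ``amplification step'' you hope for --- manufacturing $\floor{n/4}$ flat directions from a single parallel Jacobi field --- has no reason to exist; there is no mechanism (Bianchi identity or otherwise) that turns one flat $2$-plane into a $\floor{n/4}$-dimensional flat subspace of $v^\perp$. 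So the step you flagged as ``the main obstacle'' is not merely technical: it is false in the regime $n\geq 8$, and the proposed route collapses.

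The paper's actual proof sidesteps Theorem~\ref{thm:general estimate} entirely. It uses the openness of the $k$-Ricci condition to get a neighborhood $U$ on which $\op{Ric}_{\floor{n/4}+1}<-\epsilon_0$, then invokes Theorem~6 of \cite{CW17}, which is a \emph{pointwise} determinant estimate $\det(H_{x,\nu})^{1/2}/\det(K_{x,\nu})\leq C$ proved directly under the $(\floor{n/4}+1)$-Ricci condition (without ever establishing $u>0$), and finally feeds this into inequality~(\ref{ineq:Jac}) and Theorem~\ref{thm:straightening}. If you want to salvage your approach you would need an analogue of that Jacobian estimate rather than a reduction to the $\op{Tr}_2$ condition.
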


\begin{remark}
Dual to the aforementioned $\ell^1$-norm on $C_*(X; \mathbb R)$, we also have an
$\ell^\infty$-norm $\norm{\cdot }_\infty$ on the real vector spaces $C^*(X;
\mathbb R)= \text{Hom}_{\mathbb R}(C_*(X; \mathbb R) , \mathbb R) = [C_*(X;
\mathbb R)]^{\vee}$ appearing in the cochain complex for singular cohomology
$H^*(X; \mathbb R)$. By considering the bounded elements, one obtains a
subcomplex of the cochain complex, whose homology yields the {\it bounded
cohomology} $H^*_b(X; \mathbb R)$. The natural inclusion of cochain complexes
induces a comparison map $c: H^*_b(X; \mathbb R) \rightarrow H^*(X; \mathbb R)$
from the bounded cohomology to the ordinary cohomology. Elements in the image
are cohomology classes which have bounded representatives, and so admit a
well-defined $\ell^\infty$-norm. Hence, the simplicial volume of $M$ vanishes if
and only if the comparison map $c:H^n_b(M,\mathbb R) \rightarrow H^n(M,\mathbb R)$ in top dimension is the zero map. In the setting of the above theorems, positivity of the simplicial volume implies that the top dimentional comparison map is surjecive and in particular, the bounded cohomology $H^n_b(M,\mathbb R)$ is nontrivial.

Note however, that the pointwise estimate of the above theorem is not necessarily sufficient to give surjectivity of the comparison map for bounded cohomology in lower dimensions analogous to the statement of Theorem 2 in \cite{CW17}.
\end{remark}

\subsection*{Acknowledgments} The authors would like to thank Werner Ballmann and Jean-Fran\c{c}ois Lafont for helpful discussions and the anonymous referees for both a careful scrutiny and suggesting several helpful clarifications.

\section{Barycenters and Straightening}\label{sec:PS}

\subsection{Rank of a manifold} In the context of nonpositively curved manifold, the notion of geometric rank can be viewed as a generalization of the usual rank of a symmetric space. We recall the following definition,

\begin{defn}\label{def:rank}
	Let $M$ be a nonpositively curved manifold. For any nonzero vector $v\in TM$, we define the {\em rank} of $v$, denoted $\op{rank}(v)$ to be the dimension of the space of all parallel Jocobi fields along the geodesic tangent to $v$. We say $M$ is {\em rank one} if there exists a rank one vector on $TM$, and is {\em higher rank} otherwise. Similarly we define $\op{rank}^+(v)$ to be the dimension of the space of all parallel Jocobi fields along the geodesic ray tangent to $v$.
\end{defn}

\begin{remark}
$\op{rank}^+$ one vectors are automatically rank one. For backward recurrent vectors $v$ we have $\op{rank}^+(v)=\op{rank}(v)$. When $M$ is compact, backward recurrent vectors are dense (\cite{Ballmann-Brin-Eberlein85}). Since $\op{rank}^+$ and $\op{rank}$ are upper semicontinuous on $T^1M$, it follows that a compact manifold is $\op{rank}$ one if and only if it is $\op{rank}^+$ one.
\end{remark}

One remarkable result about higher rank manifolds is that they are well understood, and completely classified by the following higher rank rigidity theorem,

\begin{thm}\label{thm:rank-rigidity}(\cite{Ballmann85,BS87})
	If $M$ is closed, nonpositively curved manifold of higher rank, then the universal cover $\til M$ is either a nontrivial product of nonpostively curved manifolds, or a higher rank symmetric space of noncompact type.
\end{thm}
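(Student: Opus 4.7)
The plan is to follow the argument of Ballmann (and independently Burns--Spatzier), splitting the analysis of $\til M$ into two cases via the de Rham decomposition. Since $\til M$ is complete and simply connected, it factors uniquely as a Riemannian product $\til M \cong \R^k \times \til M_1 \times \cdots \times \til M_\ell$ with each $\til M_i$ irreducible of dimension $\geq 2$. If $k\geq 1$ or $\ell \geq 2$, the splitting is nontrivial and $\til M$ is a nontrivial product of nonpositively curved manifolds, yielding the first alternative. Otherwise $\til M$ is itself irreducible, and the task becomes showing that it is a symmetric space of noncompact type.

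In the irreducible case, the higher-rank hypothesis furnishes, for every $v \in T^1\til M$, a nonzero parallel Jacobi field orthogonal to $\dot\gamma_v$. I would then invoke the dynamics of the geodesic flow $g^t$ on $T^1M$: compactness of $M$ together with the Ballmann--Brin--Eberlein density result gives an open dense set of $v\in T^1M$ whose forward and backward $g^t$-orbits are recurrent. Upper semicontinuity of $\op{rank}$ together with continuity of the minimal-rank parallel Jacobi distribution on this set allows one to transport the parallel distribution coherently along these recurrent orbits, and to extend the corresponding totally geodesic flat in $\til M$ in a controlled way.

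The main step, and the main obstacle, is upgrading these infinitesimally parallel directions to global isometries of $\til M$---specifically, transvections: one-parameter subgroups of $\op{Isom}(\til M)$ translating along a fixed geodesic and realizing parallel transport on a subbundle of rank $\geq 2$. Recurrence of $g^t$ in $T^1M$ produces sequences of deck transformations of $\til M$ whose lifts approximate such translations; a limiting argument---via the Tits metric on $\til M(\infty)$ in Ballmann's version, and via smoothness of the stable/unstable foliations in Burns--Spatzier's---extracts honest transvections. The subtle point is that higher rank combined with irreducibility must be used crucially to ensure that the transvections produced along different recurrent geodesics do not all lie in a common flat or commuting family, so that they generate a genuinely large subgroup of $\op{Isom}(\til M)$ rather than an abelian one.

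Once a sufficient supply of transvections is obtained, I would show that $\op{Isom}(\til M)$ acts transitively on $\til M$, and then invoke Eberlein's rigidity characterization of symmetric spaces among transitive simply-connected nonpositively curved manifolds carrying a nontrivial parallel Jacobi distribution along every geodesic, concluding that $\til M$ is symmetric of noncompact type. Because the full argument is both long and foundational, in this paper the theorem is used as a black box via the citations to \cite{Ballmann85, BS87}; the plan above is a sketch of their proof rather than a self-contained rederivation.
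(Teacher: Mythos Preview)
Your assessment is correct: the paper does not prove this theorem at all---it is stated purely as a citation to \cite{Ballmann85,BS87} and used as a black box, exactly as you note in your final paragraph. Your sketch is a reasonable high-level outline of those original arguments, so there is nothing further to compare.
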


For our purpose, we would like to see how the rank is reflected by the conditions of our theorems, in terms of $u(x)$ and $\op{Ric}_k$. In fact, we will see in Lemma \ref{lem:tr2-implies-rank1} that if $u(x)$ is not identically zero on a closed manifold $M$, then $M$ is rank one. On the other hand, we have

\begin{lem}\label{lem:k-ricci-implies-rank1}
If there exists a point $x\in M$ such that $\op{Ric}_{\floor{\frac{n}{4}}+1}(v_x, v_x) < 0$ for all $v_x\in T^1_xM$, then $M$ is rank one.
\end{lem}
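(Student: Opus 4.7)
The plan is to argue the contrapositive: assuming $M$ is not rank one, I produce at the given $x$ a unit vector $v_x\in T_x^1M$ with $\op{Ric}_{\floor{n/4}+1}(v_x,v_x)=0$, contradicting the hypothesis. The crucial reformulation is that, since $M$ is nonpositively curved, the operator $R_v := R(v,\cdot)v\colon T_xM\to T_xM$ is negative semidefinite; hence $\op{Ric}_k(v,v)=0$ if and only if the flat kernel $\ker R_v$ has dimension at least $k$. Because $M$ is not rank one, the remark after Definition~\ref{def:rank} implies every unit vector has rank at least $2$, and Theorem~\ref{thm:rank-rigidity} then yields the dichotomy: $\til M$ is either (i) a nontrivial Riemannian product, or (ii) an irreducible higher-rank symmetric space of noncompact type.

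In case (i), write $\til M = \til M_1\times\cdots\times\til M_s$ with $s\geq 2$ and $n_i = \dim\til M_i$, and WLOG $n_1 = \min_i n_i \leq n/s \leq n/2$. The de~Rham splitting gives an orthogonal decomposition $T_xM = V_1\oplus\cdots\oplus V_s$. For any unit $v_x \in V_1$ and any $w \in V_2\oplus\cdots\oplus V_s$, the product structure of the curvature forces $R(v_x,w)v_x=0$; hence $\dim\ker R_{v_x} \geq n - n_1 \geq n/2 \geq \floor{n/4}+1$, giving the required vector.

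In case (ii), fix a Cartan subspace $\mathfrak{a}\subset T_xM$ of dimension $r\geq 2$, with restricted root space decomposition $T_xM = \mathfrak{a}\oplus\bigoplus_{\alpha>0}\mathfrak{p}_\alpha$ of multiplicities $m_\alpha$. For $v\in\mathfrak{a}$, the standard identity $R(v,X)v = -\alpha(v)^2 X$ on $\mathfrak{p}_\alpha$ yields $\dim\ker R_v = r + \sum_{\alpha(v)=0,\,\alpha>0} m_\alpha$. If $r\geq\floor{n/4}+1$, any regular $v_x\in\mathfrak{a}$ already gives the desired flat kernel. Otherwise, pick a simple root $\alpha_0$ of minimal multiplicity and let $v_x$ span the one-dimensional subspace $\bigcap_{\alpha\text{ simple},\,\alpha\neq\alpha_0}\ker\alpha$; the positive roots that do not vanish on $v_x$ are then exactly those whose simple-root expansion involves $\alpha_0$, and $\dim\ker R_{v_x}$ equals $n$ minus their total multiplicity.

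The main obstacle is the uniform estimate needed in case (ii) when $r<\floor{n/4}+1$: one must verify that the sum of multiplicities of positive roots involving $\alpha_0$ is bounded by $n-\floor{n/4}-1$. This is a finite verification across the classification of irreducible noncompact higher-rank symmetric spaces, using standard tables of restricted root systems and root multiplicities. With that bound in hand, $\dim\ker R_{v_x}\geq\floor{n/4}+1$ in all cases, whence $\op{Ric}_{\floor{n/4}+1}(v_x,v_x)=0$, contradicting the hypothesis and forcing $M$ to be rank one.
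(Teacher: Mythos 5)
Your overall strategy matches the paper's exactly: argue the contrapositive via the higher rank rigidity theorem, splitting into the reducible case and the irreducible symmetric space case. Your product case argument is essentially identical to the paper's (both take $v$ in the lowest-dimensional factor and use $\dim\ker R_v \geq n-n_1 \geq n/2 \geq \floor{n/4}+1$).

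The difference is in the irreducible symmetric space case, and there you have a genuine gap. You correctly reduce the problem to the root-theoretic computation of $\max_{v\in\mathfrak a}\dim\ker R_v$, but you leave the decisive inequality
\[
\max_v \dim\ker R_v \;\geq\; \floor{n/4}+1
\]
as an unperformed ``finite verification across the classification.'' That verification \emph{is} the content of this half of the lemma; without it the argument is incomplete. The paper closes exactly this gap by defining the \emph{splitting rank} $\op{srk}(\til M)$ --- the maximal dimension of a totally geodesic submanifold $Y\subset\til M$ splitting an isometric $\mathbb R$-factor, which (as your computation shows) coincides with $\max_v\dim\ker R_v$ --- and then citing Table 1 of \cite{Wang16}, which tabulates $\op{srk}$ for every irreducible noncompact-type symmetric space and verifies $\floor{n/4}+1\leq\op{srk}(\til M)$ throughout. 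So your quantity is the right one, but you would need to carry out, or cite, that table. A secondary point: your heuristic of choosing $\alpha_0$ to be a simple root of \emph{minimal multiplicity} $m_{\alpha_0}$ does not obviously minimize the relevant quantity $\sum_{\alpha>0,\,\alpha\supset\alpha_0}m_\alpha$ (a low-multiplicity root sitting in the middle of the Dynkin diagram can be involved in many more positive roots than a higher-multiplicity root at an end), so the optimization over choices of $\alpha_0$ also has to be done type by type; this is again what Wang's table records.
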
 
\begin{proof} We begin to show that the universal cover $\til M$ cannot be a product. If $\til M$ was a product, we can write $\til M=M_1^{(n_1)}\times M_2^{(n_2)}$, where $n_1\leq n/2\leq n_2$. Any vector $v\in T^1M_1\oplus 0\subset T^1 \til M$ has $\op{Ric}_{n_2}(v,v)=0$ since all vectors in the $M_2$ factor have zero sectional curvature with $v$. This implies $\op{Ric}_{\floor{\frac{n}{4}}+1}(v,v)=0$ as $\floor{\frac{n}{4}}+1\leq n/2\leq n_2$, contrary to the hypothesis. 

Secondly, we show $\til M$ cannot be an irreducible symmetric space either. If not, we define the splitting rank of a symmetric space $\til M$, denoted by $\op{srk}(\til M)$, to be the maximal dimension of a totally geodesic submanifold $Y\subset \til M$ which splits off an isometric $\mathbb R$-factor. If we take a vector $v$ corresponding to the $\mathbb R$-factor of a submanifold $Y$ that attains the splitting rank, all vectors in $T^1Y\subset T^1\til M$ will have zero sectional curvatures with $v$, hence $\op{Ric}_{\op{srk}(X)}(v,v)=0$. On the other hand, the splitting rank has been computed explicitly in Table 1 of \cite{Wang16} for all irreducible symmetric spaces of noncompact type, and in particular, we have $\floor{\frac{n}{4}}+1\leq\op{srk}(\til M)$. Therefore, we have $\op{Ric}_{\floor{\frac{n}{4}}+1}(v,v)=0$, giving a contradiction.

If $\til M$ is neither a product nor symmetric, then $M$ is rank one by Theorem \ref{thm:rank-rigidity}.
\end{proof}

\subsection{Busemann functions}

Recall that, for any triple $(x,y,\theta)\in \til M\times \til M\times \partial \til M$, the Busemann function $B$ on $\til M$ is defined by
$$B(x,y,\theta)=\lim_{t\rightarrow \infty}(d_{\til{M}}(y,\gamma_\theta(t))-t)$$
where $\gamma_\theta(t)$ is the unique geodesic ray from $x$ to $\theta$. Denote by $v_{x,\theta}\in T^1_xM$ the unit vector $v_{x,\theta}=\gamma_\theta'(0)$.

We fix a basepoint $O$ in $\til{M}$ and shorten $B(O,y,\theta)$ to just $B(y,\theta)$.
We note that for fixed $\theta\in \partial\til{M}$ the Busemann function $B(x,\theta)$ is
convex on $\til{M}$, and the nullspace of its Hessian $DdB_{(x,\theta)}$ consists of the initial vectors of all parallel Jacobi fields along the geodesic ray tangent to $v_{x,\theta}$ (see the proof of Lemma \ref{lem:tr2-implies-rank1}). In particular, we have the following lemma:

\begin{lem}\label{lem:tr2-implies-rank1}
$v_{x,\theta}$ is $\op{rank}^+$ one if and only if $\op{Tr}_2DdB_{(x,\theta)}>0$. As a result, if there exists a point $p\in M$, so that $u(p)>0$, then $M$ is rank one. 
\end{lem}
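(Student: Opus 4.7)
The plan is to identify the kernel of $DdB_{(x,\theta)}$ with the space of initial values of parallel Jacobi fields along the forward geodesic ray $\gamma_\theta\colon[0,\infty)\to \til M$, which has dimension $\op{rank}^+(v_{x,\theta})$ by definition. Since $DdB_{(x,\theta)}$ is positive semi-definite (being the Hessian of a convex function in nonpositive curvature), the quantity $\op{Tr}_2 DdB_{(x,\theta)}$ is strictly positive if and only if the kernel is exactly one-dimensional, which, by the identification, is equivalent to $v_{x,\theta}$ being $\op{rank}^+$ one.

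To set this up, I would recall the standard facts that $B(\cdot,\theta)$ is $C^2$, convex, and satisfies $|\nabla B|\equiv 1$ on $\til M$; differentiating the last identity gives $DdB_{(x,\theta)}(\nabla B,\cdot)=0$, so $v_{x,\theta}=-\nabla B(x,\theta)$ automatically lies in the kernel. Restricted to $v_{x,\theta}^\perp$, the Hessian coincides with the shape operator of the horosphere through $x$ based at $\theta$, which admits the classical expression in terms of stable Jacobi fields: for $w\in v_{x,\theta}^\perp$ the unique bounded Jacobi field $J_w$ along $\gamma_\theta$ with $J_w(0)=w$ satisfies $DdB_{(x,\theta)}(w,w)=\langle J_w'(0),w\rangle$.

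The core step is to show that $w\in v_{x,\theta}^\perp\cap\ker DdB_{(x,\theta)}$ if and only if $J_w$ is parallel along $\gamma_\theta|_{[0,\infty)}$. The $(\Leftarrow)$ direction is immediate. For $(\Rightarrow)$, I would use that in nonpositive curvature $t\mapsto|J_w(t)|^2$ is a bounded convex function with vanishing derivative at $0$; hence it is constant, which combined with the Jacobi equation forces $J_w$ to be parallel. Together with the tautological parallel contribution from $v_{x,\theta}$ itself, this gives $\dim\ker DdB_{(x,\theta)}=\op{rank}^+(v_{x,\theta})$ and completes the first assertion.

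For the second assertion, if $u(p)>0$ then every $v\in T^1_pM$ satisfies $\op{Tr}_2 DdB_v\geq u(p)>0$ and is therefore $\op{rank}^+$ one, hence rank one; in particular $TM$ contains a rank-one vector, so $M$ is rank one by Definition \ref{def:rank}. The main technical point will be establishing the identity $DdB(w,w)=\langle J_w'(0),w\rangle$ with the correct sign convention, together with the convexity-plus-boundedness argument that pins down parallelism of $J_w$; both are standard facts in the geometry of nonpositive curvature.
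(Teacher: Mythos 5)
Your proposal is correct and takes essentially the same route as the paper: both identify the kernel of $DdB_{(x,\theta)}$ on $v_{x,\theta}^\perp$ with initial values of stable Jacobi fields $J$ having $J'(0)=0$ (the paper citing Heintze--Im Hof, you via the Hessian/second-fundamental-form identity), and both use the convexity of $\|J(t)\|^2$ in nonpositive curvature to pass to parallelism. One small correction: with the paper's convention $B(y)=\lim_{t\to\infty}(d(y,\gamma_\theta(t))-t)$ and $J_w$ the bounded (stable) Jacobi field, $\|J_w\|^2$ is convex and bounded hence nonincreasing, so $\langle J_w'(0),w\rangle\leq 0$ while $DdB\geq 0$; the identity should therefore read $DdB_{(x,\theta)}(w,w)=-\langle J_w'(0),w\rangle$, though this sign does not affect the validity of your argument since you only use the vanishing of the quantity.
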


\begin{proof}
For any $w\in v_{x,\theta}^\perp\subset T_xM$, $DdB_{(x,\theta)}(w)=0$ if and only if there is a stable Jacobi field $J$ along the geodesic tangent to $v_{x,\theta}$ with $J(0)=w$ and $J'(0)=0$ (Proposition 3.1 of \cite{Heintze-ImHof}). By convexity of the norm of Jacobi fields in nonpositively curved manifolds, $\norm{J(t)}$ is constant in $t$ and so $J(t)$ is simply the parallel translation of $w$ along $g^t v_{x,\theta}$. In particular, the existence of such a field implies $\op{rank}^+(v_{x,\theta})\geq 2$.
\end{proof}

\subsection{Lyapunov exponents}
For any $v\in T^1M$ define the \emph{lower Lyapunov exponent} at $v$ to be the quantity
\[
\la^-_v=\min_{u\in v^\perp\cap T^1M}\bar{\lim_{t\to\infty}} \frac{1}{t}\log \norm{\Lambda_{v,t}(u)}, 
\]
where $\Lambda_{v,t}:v^\perp\to (g^tv)^\perp$ is the unstable Jacobi tensor along the geodesic through $v$. Consequently, $\op{Tr}_2DdB_v=0$ implies $\la^-_v= 0$.

For $v\in T^1M$ we define the (weak) stable manifold through $v$ to be
\[
W^s(v)=\set{w\in T^1\til{M}\,:\, d_{T^1\til{M}}(g^tw,g^tv)\leq C \text{ for all $t\geq 0$ and some $C>0$}}.
\] 
The following is a restatement of Proposition 3.1 of \cite{Connell03b} in our setting. (In that paper $\la^-_v$ is written $\la^+_v$ since it coincides with the negative of the largest nonpositive stable Lyapunov exponent.)
\begin{lem}\label{lem:la-boundary}
The quantity $\la^-_v$ is constant on the stable manifold $W^s(v)$, and consequently only depends on the point $v(\infty)\in \partial \til{M}$. 
\end{lem}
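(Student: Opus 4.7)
The plan is to use the equivalent characterization noted in the parenthetical after the definition: $\la^-_v$ equals the negative of the largest nonpositive stable Lyapunov exponent along the geodesic $\gamma_v$ through $v$. Explicitly,
\[
\la^-_v = -\max_{u\in v^\perp\cap T^1M}\limsup_{t\to\infty}\frac{1}{t}\log\norm{J^s_{v,u}(t)},
\]
where $J^s_{v,u}$ is the unique stable perpendicular Jacobi field along $\gamma_v$ with $J^s_{v,u}(0)=u$. Stable perpendicular Jacobi fields arise as variation vector fields of smooth horospherical deformations of $\gamma_v$ through geodesics forward-asymptotic to $\theta=v(\infty)$, and along $\gamma_v$ their norms are governed by the Riccati operator $DdB_\theta$ restricted to $(g^tv)^\perp$. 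Thus the dynamical data controlling $\la^-_v$ is intrinsic to the Busemann function $B_\theta$, strongly suggesting the exponent depends only on $\theta$; what must be verified is that the computation is insensitive to the choice of integral curve of $-\nabla B_\theta$ within the leaf $W^s(v)$.

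Given $w\in W^s(v)$ we have $w(\infty)=\theta$, so $\gamma_w$ is also an integral curve of $-\nabla B_\theta$. After replacing $w$ by a suitable $g^{t_0}w$ I may assume $v(0)$ and $w(0)$ lie on the common horosphere $H=B_\theta^{-1}(B_\theta(v(0)))$. I would define $\Phi:v^\perp\to w^\perp$ by parallel transport along a path in $H$ from $v(0)$ to $w(0)$. For each $u\in v^\perp$, both $J^s_{v,u}$ and $J^s_{w,\Phi(u)}$ can be realized as variation vector fields of a single smooth two-parameter family of geodesics all asymptotic to $\theta$, built by horospherically translating initial points in the direction $u$ (resp.\ $\Phi(u)$) and then flowing along $-\nabla B_\theta$.

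The core step, and the principal obstacle, is to establish a uniform two-sided comparison $\norm{J^s_{v,u}(t)}\asymp \norm{J^s_{w,\Phi(u)}(t)}$ with constants independent of $t$. The hypothesis $w\in W^s(v)$ forces $d(\gamma_v(t),\gamma_w(t))\leq C$, and by convexity of distance in nonpositive curvature this is non-increasing in $t$, so the two geodesics remain in a tube of bounded width for all forward time. Using uniform bounds on $DdB_\theta$ (from the compactness of $M$ combined with Hessian comparison), one can derive a Gr\"onwall-type estimate comparing the two Riccati flows across the tube. Once this uniform comparison is in place, taking $\log$, dividing by $t$, and passing to $\limsup$ yields equal stable Lyapunov spectra, hence $\la^-_v=\la^-_w$. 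The second assertion follows because $W^s(v)$ coincides with the set of $w$ with $w(\infty)=\theta$.

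The technical difficulty, absent in strictly negative curvature where exponential contraction on stable leaves would trivialize the comparison, is carrying out the Gr\"onwall estimate in the general nonpositively curved setting, where orbits within $W^s(v)$ may fail to converge. The cleanest route is to follow Proposition 3.1 of \cite{Connell03b}, which performs exactly this uniform analysis working intrinsically on the stable foliation of $T^1\til M$.
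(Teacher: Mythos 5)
Your proposal and the paper's rest on the same foundation: the paper offers no independent argument for this lemma, only the citation to Proposition 3.1 of \cite{Connell03b} together with the notational remark you also invoke, and your sketch likewise ends by deferring to that proposition. Your reconstruction of what the proof must accomplish is reasonable and you correctly flag the genuine difficulty: a naive Gr\"onwall comparison of the two Riccati flows controls the ratio of stable Jacobi norms only up to a factor like $\exp\left(\int_0^t\norm{DdB_{(\gamma_v(s),\theta)}-DdB_{(\gamma_w(s),\theta)}}\,ds\right)$, which need not stay bounded since in nonpositive curvature the forward-asymptotic geodesics $\gamma_v$ and $\gamma_w$ need not converge; closing that gap is precisely the substance of the cited proposition.
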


We will thus define $\la^-_\theta$ to be $\la^-_{v_{x,\theta}}$ for any $x\in \til{M}$.

\subsection{Patterson-Sullivan measures and barycenters}

Let $M$ be a closed nonpositively curved rank one
manifold, $\til{M}$ the universal cover of $M$, and $\Gamma$ the fundamental group of $M$. In \cite{Knieper97}, Knieper showed that there exists a unique (up to scale) family of finite Borel measures $\{\mu_x\}_{x\in \til{M}}$ fully supported on $\partial \til{M}$, called the Patterson-Sullivan measures, which satisfies:
\begin{enumerate}
	\item $\mu_x$ is $\Gamma$-equivariant, i.e. $\ga_*\mu_{x}=\mu_{\gamma x}$ for all $x\in \til{M}$ and $\gamma\in \Gamma$,  and
	\item $\frac{d\mu_x}{d\mu_y}(\theta)=e^{hB(x,y,\theta)}$, for all $x,y\in \til{M}$, and
	$\theta\in \partial\til{M}$.
\end{enumerate}
where $h$ is the volume entropy of $M$, and $B(x,y,\theta)$ is the Busemann function of
$\til{M}$. 

If $\nu$ is any finite Borel measure fully supported on
$\partial\til{M}$, by taking the integral of $B(x,\theta)$ with respect to $\nu$, we
obtain a function
$$x\mapsto \mathcal{B}_{\nu}(x):=\int_{\partial \til{M}}B(x,\theta)d\nu(\theta)$$

\begin{lem}\label{lem:strictly convex} If $M$ is rank one and $\nu$ is any finite Borel measure
that is fully supported on $\partial\til{M}$, then the function $x\mapsto \mathcal{B}_{\nu}(x)$ is strictly convex. Moreover, if $\nu$ has no atoms and $\nu\left(\set{\theta\in\partial\til{M}: \la^-_\theta=0}\right)=0$, then this map attains a unique minimum, denoted $\op{bar}(\nu)$, in $\til{M}$.
\end{lem}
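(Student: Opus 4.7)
The plan splits into two parts: strict convexity, then coercivity to obtain a minimum (with uniqueness following from strict convexity).

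For strict convexity I would differentiate under the integral to write
\[
\mathrm{Hess}(\mc{B}_\nu)_x(v,v) = \int_{\partial \til{M}} DdB_{(x,\theta)}(v,v)\, d\nu(\theta).
\]
Each integrand is nonnegative by nonpositive curvature, so it suffices to produce a nonempty open $U \subset \partial \til{M}$ on which the integrand is strictly positive; full support of $\nu$ then forces $\nu(U) > 0$ and therefore positivity of the integral. The key observation is that whenever $v_{x,\theta}$ is $\op{rank}^+$ one, Lemma \ref{lem:tr2-implies-rank1} together with the identity $DdB_{(x,\theta)}(v_{x,\theta},\cdot) = 0$ forces $DdB_{(x,\theta)}$ to be positive definite on $v_{x,\theta}^\perp$, so $DdB_{(x,\theta)}(v,v) > 0$ whenever $v$ is not collinear with $v_{x,\theta}$. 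Since $M$ is compact and rank one, the remark following Definition \ref{def:rank} produces $\op{rank}^+$ one vectors in $T^1 M$, which lift to $T^1 \til{M}$ and whose asymptotic endpoints all land in the $\Gamma$-invariant set $S = \{\theta \in \partial \til{M} : v_{\cdot,\theta}\text{ is }\op{rank}^+\text{-one}\}$. Upper semi-continuity of $\op{rank}^+$ makes $S$ open, while minimality of the $\Gamma$-action on $\partial \til{M}$ (cocompact rank one) makes $S$ dense. Removing the at-most-two points where $v_{x,\theta}$ is collinear with $v$ yields the required $U$.

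For the existence of a minimum, strict convexity already handles uniqueness. For existence I would prove coercivity along an arbitrary geodesic ray $\gamma:[0,\infty)\to \til{M}$ with $\gamma(\infty) = \eta$. Differentiating,
\[
\frac{d}{dt}\mc{B}_\nu(\gamma(t)) = -\int_{\partial \til{M}} \langle v_{\gamma(t),\theta},\gamma'(t)\rangle\, d\nu(\theta),
\]
and the claim to establish is that the integrand converges pointwise to $+1$ for $\nu$-a.e.\ $\theta$. The no-atom hypothesis kills the isolated direction $\theta = \eta$, while $\nu(\{\la^-_\theta = 0\}) = 0$ kills the degenerate directions. For any remaining $\theta$, namely $\theta \neq \eta$ with $\la^-_\theta > 0$, I need $\langle v_{\gamma(t),\theta},\gamma'(t)\rangle \to -1$, i.e., the geodesic from $\gamma(t)$ to $\theta$ becomes asymptotically antiparallel to $\gamma'(t)$. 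Bounded convergence would then give $\frac{d}{dt}\mc{B}_\nu(\gamma(t)) \to \nu(\partial \til{M}) > 0$, so $\mc{B}_\nu(\gamma(t)) \to \infty$.

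The hardest step is the pointwise asymptotic anti-alignment just described, a visibility-type statement in rank one NPC geometry. The leverage is that $\la^-_\theta > 0$ implies $\op{Tr}_2 DdB_{v_{\cdot,\theta}} > 0$ (the contrapositive of the observation preceding Lemma \ref{lem:la-boundary}), hence $v_{\cdot,\theta}$ is $\op{rank}^+$-one by Lemma \ref{lem:tr2-implies-rank1}, which yields exponential contraction on the stable manifold at $\theta$. I plan to combine this contraction with the fact that $\gamma(t)$ leaves every horoball based at $\theta$ at linear rate (because $\theta \neq \eta$), forcing the tangent at $\gamma(t)$ of the geodesic to $\theta$ to concentrate opposite to $\gamma'(t)$, quantitatively in the style of Proposition 3.1 of \cite{Connell03b} already invoked for Lemma \ref{lem:la-boundary}.
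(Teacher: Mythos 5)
Your proposal follows the same two-part skeleton as the paper (strict convexity via positivity of the integrated Hessian, then coercivity along rays to force an interior minimum), but you execute both halves a little differently.

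For strict convexity, you construct directly an open positive-measure subset of $\partial\til{M}$ on which $DdB_{(x,\theta)}(v,v)>0$, using that $\op{rank}^+$-one vectors exist (the remark after Definition~\ref{def:rank}), that the corresponding boundary set is open by upper semicontinuity, and that full support of $\nu$ then gives positive measure. The paper instead argues by contradiction: a nonzero kernel vector $w$ forces $\op{Tr}_2DdB_{(x,\theta)}=0$ for all $\theta$, which is then contradicted by Knieper's density of periodic rank-one vectors together with Lemma~\ref{lem:la-boundary}. Both work; your version is slightly more direct, but you should be careful that your set $S$ is genuinely independent of the basepoint $x$ (this is exactly where the paper invokes Lemma~\ref{lem:la-boundary}). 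Also, minimality of the $\Gamma$-action is not needed: openness and nonemptiness of $S$ already give $\nu(S)>0$.

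For existence of the minimum, you aim for the stronger statement that $\langle v_{\gamma(t),\theta},\gamma'(t)\rangle\to -1$ for $\nu$-a.e.\ $\theta$, and then invoke bounded convergence to conclude $\frac{d}{dt}\mc{B}_\nu(\gamma(t))\to\nu(\partial\til{M})>0$. This is actually a cleaner way to obtain coercivity than the paper's horoball-and-angle argument, which only records that eventually $\angle_{\gamma(t)}(\gamma'(t),v_{\gamma(t),\theta})>\pi/2$; taken alone, that weaker conclusion is compatible with the angle limiting to $\pi/2$ and the gradient limiting to zero. However, where you write that $\gamma(t)$ ``leaves every horoball based at $\theta$ at linear rate (because $\theta\neq\eta$),'' the parenthetical is misleading and the assertion is the real content, not a byproduct of $\theta\neq\eta$: in a flat, $B(\gamma(t),\theta)$ can tend to $-\infty$ even though $\theta\neq\gamma(\infty)$. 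The linear escape rate is precisely a rank-one visibility statement: because $\la^-_\theta>0$ makes every geodesic ray to $\theta$ $\op{rank}^+$-one, one can join $\eta$ to $\theta$ by a bi-infinite geodesic and compare Busemann values along asymptotic rays to conclude $B(\gamma(t),\theta)=t+O(1)$, hence $\langle v_{\gamma(t),\theta},\gamma'(t)\rangle\to -1$. This Ballmann--Brin--Eberlein input is what actually powers the step, and it is also what the paper's phrase ``for $t$ sufficiently large $\gamma(t+s)$ is outside $H_t$'' silently rests on. So your plan is correct, but the justification needs to route through the visibility of $\op{rank}^+$-one boundary points rather than the bare hypothesis $\theta\neq\eta$.
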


\begin{proof} For the first statement, it is sufficient to show that the Hessian
	$$\int_{\partial\til{M}}DdB_{(x,\theta)}(\cdot,\cdot)d\nu(\theta)$$
	is positive definite for all $x\in\til{M}$. Suppose, to the contrary, that at some point $x$ this operator has a nonzero element $w\in T_xM$ in its kernel. Since $DdB_{x,\theta}$ is nonnegative and $\nu$ is fully supported, $DdB_{(x,\theta)}(w)=0$ for almost all $\theta\in \partial\til{M}$, and hence by continuity all $\theta \in \partial\til{M}$.  In other words, $\op{Tr}_2DdB_{(x,\theta)}=0$ for all $\theta\in \partial\til{M}$, except possible when $v_{x,\theta}=\pm w$. However,  by continuity  $\op{Tr}_2DdB_{(x,\theta)}=0$ for all $\theta\in \partial\til{M}$. By Lemma \ref{lem:tr2-implies-rank1}, $v_{x,\theta}$ is not $\op{rank}^+$ one for any $\theta\in \partial\til{M}$. 
	
	Since $M$ is rank one,  Corollary 1.2 of \cite{Knieper98} implies that there is a dense set of periodic rank one vectors in $T^1M$. If $v_{y,\theta}$ is such a vector, then $\la^{-}_{\theta}>0$. By Lemma \ref{lem:la-boundary}, $v_{x,\theta}$ is also $\op{rank}^+$ one, a contradiction.
	
	For the last assertion, observe that $\grad B(x,\theta)=-v_{x,\theta}$ and hence 
	\[
	-\grad\mathcal{B}_{\nu}(x)=\int_{\partial\til{M}}v_{(x,\theta)}d\nu(\theta).
	\] 
	Let $\ga:[0,\infty)\to \til{M}$ be any geodesic ray and let $\theta\in \partial\til{M}$ with $\theta\neq \ga(\infty)$ be such that $\la^{-}_\theta>0$. By Lemma \ref{lem:tr2-implies-rank1} the horoballs tangent to $\theta$, i.e. the sub-level sets of $B(\cdot,\theta)$, are strictly convex. Hence if $H_t$ is the unique horoball at $\theta$ whose boundary horosphere passes through $\ga(t)$, then for $t$ sufficiently large $\ga(t+s)$ is outside $H_t$ for all $s>0$. Since $v_{\ga(t),\theta}$ is the inward pointing normal to $\partial H_t$ at $\ga(t)$, we have $\angle_{\ga(t)}(\ga'(t),v_{\ga(t),\theta}) > \frac{\pi}{2}$. Since by hypothesis $\nu$ has no atoms and $\nu\left(\set{\theta\in\partial\til{M}: \la^-_\theta=0}\right)=0$, we have $\nu\left(\set{\theta\,:\, \angle_{\ga(t)}(\ga'(t),v_{\ga(t),\theta})<\frac{\pi}{2} }\right)$ tends to $0$ as $t\to\infty$. Note that $\ga$ may intersect the boundary of $H_t$ in at most two points $\set{\ga(t'),\ga(t)}$. When there are two points, suppose $t>t'$. Thus in all cases,  $-\grad\mathcal{B}_{\nu}(\ga(t))$ makes an angle greater than $\frac{\pi}{2}$ with $\ga'(t)$. Hence for any $p\in \til{M}$, by compactness of the sphere $T^1_p\til{M}$ and continuity of $-\grad\mathcal{B}_{\nu}(\ga(t))$ there is an $R>0$ such that on the geodesic sphere $S(p,R)$, $-\grad\mathcal{B}_{\nu}(\ga(t))$ belongs to the half-space of the inward pointing normal. Thus this gradient field attains its unique zero in $B(p,R)$, which is the point $\op{bar}(\nu)$ where the minimum of $\mathcal{B}_{\nu}$ occurs.  
\end{proof}

We note that $\op{bar}(\nu)$ does not depend on the choice of basepoint $O$, since changing $O$ only changes each Busemann function, and hence $\mathcal{B}_\nu$, by an additive constant. 
 
\subsection{Straightening subordinated to $U$}

\begin{defn}\label{def:straightening} (Compare \cite{Lafont06b}) Let $\til{M}^n$ be the universal
cover of an $n$-dimensional manifold $M^n$, and $p: \til{M}^n\rightarrow M^n$ be the covering map. $U\subset M^n$ is a nonempty open set. We denote by $\Gamma$ the fundamental group
of $M^n$, and by $C_{\ast}(\til{M}^n)$ the real singular chain complex of
$\til{M}^n$. Equivalently, $C_{k}(\til{M}^n)$ is the free $\mathbb{R}$-module
generated by $C^0(\Delta^k,\til{M}^n)$, the set of singular $k$-simplices in
$\til{M}^n$, where $\Delta^k$ is equipped with some fixed Riemannian metric. We say a
collection of maps $st_k:C^0(\Delta^k,\til{M}^n)\rightarrow C^0(\Delta^k,\til{M}^n)$
is a straightening subordinated to $U$, if it satisfies the following conditions:
\begin{enumerate}
	\item the maps $st_k$ are $\Gamma$-equivariant,
	\item the maps $st_{\ast}$ induce a chain map $st_{\ast}:C_{\ast}(\til{M}^n,\mathbb
	R)\rightarrow C_{\ast}(\til{M}^n,\mathbb R)$ that is $\Gamma$-equivariantly chain
	homotopic to the identity,
	\item the image of $st_n$ lies in $C^1(\Delta^n, \til{M}^n)$, that is, the top
	dimensional straightened simplices are $C^1$,
	\item there exists a constant $C$ depending on $\til{M}^n$, $U$, and the chosen
	Riemannian metric on $\Delta^n$, such that for any pair $(f,\delta)\in C^0(\Delta^n, \til{M}^n)\times \Delta^n$ satisfying $st_n(f)(\delta)\in p^{-1}(U)$, we have a uniform upper bound on the Jacobian of $st_n(f)$ at $\delta$:
	$$|\op{Jac}(st_n(f))(\delta)|\leq C$$
where  $st_n(f):\Delta^n\rightarrow\til{M}^n$ is the corresponding straightened simplex of $f$.
\end{enumerate}
\end{defn}

\begin{thm}\label{thm:straightening}  If $\til{M}^n$ admits a straightening subordinated to some nonempty open set $U$, then the
simplicial volume of $M$ is positive.
\end{thm}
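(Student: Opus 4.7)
The plan is to exploit the straightening to bound $\int_M\omega$ from above in terms of the $\ell^1$-norm of a representing cycle, for a suitable test form $\omega$ supported in $U$. Concretely, I would fix a smooth nonnegative $n$-form $\omega$ on $M$ with compact support contained in $U$ and $\int_M\omega>0$ (e.g.\ a bump function times the Riemannian volume form), and write $\til\omega=p^*\omega$ for its $\Gamma$-invariant lift to $\til M$, which is supported in $p^{-1}(U)$.

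Next, given $\epsilon>0$, I would choose a real singular cycle $c=\sum_i a_i\sigma_i\in C_n(M;\R)$ representing $[M]$ with $\sum_i|a_i|\leq\norm{M}+\epsilon$. Lifting each $\sigma_i$ to some $\til\sigma_i\colon\Delta^n\to\til M$ and forming $st(c):=\sum_i a_i\,p\circ st_n(\til\sigma_i)$, property (1) ($\Gamma$-equivariance) makes $st(c)$ independent of the choice of lifts, and property (2) ensures it still represents $[M]$. Clearly $\norm{st(c)}_1\leq\sum_i|a_i|$.

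The core estimate comes from pairing $\omega$ with $st(c)$. Since property (3) makes each $st_n(\til\sigma_i)$ a $C^1$ map, the smooth de Rham pairing applies and yields
\[
\int_M\omega=\sum_i a_i\int_{\Delta^n}(st_n(\til\sigma_i))^*\til\omega.
\]
For each $i$ the integrand vanishes outside $\{\delta\in\Delta^n:st_n(\til\sigma_i)(\delta)\in p^{-1}(U)\}$, where property (4) gives $|\op{Jac}(st_n(\til\sigma_i))(\delta)|\leq C$. Since the absolute value of the pulled-back top form is bounded by this Jacobian times $\norm{\omega}_\infty$, I obtain the per-simplex bound $\left|\int_{\Delta^n}(st_n(\til\sigma_i))^*\til\omega\right|\leq C\,\norm{\omega}_\infty\Vol(\Delta^n)$. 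Summing and letting $\epsilon\to 0$ gives
\[
0<\int_M\omega\leq C\,\norm{\omega}_\infty\Vol(\Delta^n)\cdot\norm{M},
\]
so $\norm{M}>0$.

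The main subtlety I anticipate is justifying the de Rham pairing formula used above: the original cycle $c$ need not consist of smooth simplices, but property (3) makes $st(c)$ smooth in top degree and property (2) ensures it represents $[M]$, so the smooth singular de Rham isomorphism identifies the pairing with $\langle[\omega],[M]\rangle=\int_M\omega$. A routine bookkeeping point is that $p\circ st_n(\til\sigma_i)$ is independent of the chosen lift, which is immediate from the $\Gamma$-equivariance of $st_n$.
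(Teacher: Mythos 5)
Your proof is correct and follows essentially the same strategy as the paper's: take a bump $n$-form supported in $U$ (the paper writes this as $\phi\,dV$ with $\phi$ a bump function), replace the representing cycle by its straightening using property (b), pull back the form to the standard simplex, use the support of the form together with the Jacobian bound of property (d) to get a per-simplex bound $C\cdot\Vol(\Delta^n)$ (times $\norm{\omega}_\infty$, which the paper normalizes away by taking $0\le\phi\le1$), and take the infimum over cycles. Your explicit remark about the de~Rham pairing requiring $C^1$ simplices, supplied by property (c), is a welcome clarification of a step the paper leaves implicit.
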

\begin{proof} We choose a nontrivial smooth bump function $\phi(x)$ on $M$, such that $0\leq \phi \leq 1$, and $\phi(x)=0$ for all $x\notin U$. Let $\sum_{i=1}^la_i\sigma_i$ be a singular chain representing the fundamental class $[M]$ in the real coefficient, and $st(\sigma_i)$ be the straightened simplex of $\sigma_i$ on $M$, with lift $\til{st (\sigma_i)}$ on the universal cover $\til M$. We have
\begin{align}\label{eq:str-htp-equiv}
\int_M\phi(x)dV &=\int_{[\sum a_i\sigma_i]}\phi(x)dV =\int_{[\sum a_ist(\sigma_i)]}\phi(x)dV \\ \label{ineq:lift}
& \leq \sum_{i=1}^l |a_i|\cdot\left|\int_{\til{ st(\sigma_i) }  }\til{\phi}(x)d\til V\right|\\ \label{eq:U supp phi}
& =  \sum_{i=1}^l |a_i|\cdot\left|\int_{\til{st(\sigma_i)}\cap p^{-1}(U)}\til{\phi}(x)d\til V\right|\\ \label{ineq:pull-back}
& \leq \sum_{i=1}^l |a_i|\cdot\left|\int_{(p\circ \til{st(\sigma_i)})^{-1}(U)} \phi( st(\sigma_i)(\delta))|Jac(st(\sigma_i))(\delta)|dV_{\Delta}\right|\\ \label{ineq:Jac-control}
& \leq  \sum_{i=1}^l |a_i|\cdot C\op{Vol}(\Delta^n)
\end{align}
where equation (\ref{eq:str-htp-equiv}) follows from (b) of Definition \ref{def:straightening}, inequality (\ref{ineq:lift}) lifts to the universal cover $\til M$, equation (\ref{eq:U supp phi}) uses the support of $\phi$, inequality (\ref{ineq:pull-back}) pulls the integral back on $\Delta^n$, and inequality (\ref{ineq:Jac-control}) follows from (c)(d) of  Definition \ref{def:straightening}.

By taking the infimum over all fundamental class representatives $\sum a_i\sigma_i$, we have
$$||M||\geq \frac{\int_M\phi(x)dV}{C\op{Vol}(\Delta^n)}>0$$

\end{proof}

\subsection{Barycentric straightening}
The barycentric straightening was introduced by Lafont and Schmidt \cite{Lafont06b}
(based on the barycenter method originally developed by Besson, Courtois, and Gallot
\cite{Besson95}) to show the positivity of simplicial volume of most locally symmetric spaces of noncompact type. 

Briefly speaking, any old $k$-simplex on a manifold gives $k+1$ vertices, which forms $k+1$ Patterson-Sullivan measures. These measures can be viewed as $k+1$ vertices in the affine space of all measures supported on the boundary of the manifold. Using these vertices, we can fill up a simplex on the space of measures by linear combinations. Finally, applying the barycenter map, it gives a new simplex on the original manifold.

More explicitly, we denote by
$\Delta^k_s$ the standard spherical k-simplex in the Euclidean space, that is
$$\Delta^k_s=\Big\{(a_1,\ldots ,a_{k+1})\mid a_i\geq 0,
\sum_{i=1}^{k+1}a_i^2=1\Big\}\subseteq \mathbb{R}^{k+1},$$
with the induced Riemannian metric from $\mathbb{R}^{k+1}$, and with ordered vertices
$(e_1,\ldots,e_{k+1})$. Given any singular $k$-simplex $f:\Delta^k_s\rightarrow
\til{M}$, with ordered vertices
$V=(x_1,\ldots,x_{k+1})=\left(f(e_1),\ldots,f(e_{k+1})\right)$, we define the
$k$-straightened simplex
$$st_k(f):\Delta^k_s\rightarrow \til{M}$$
$$st_k(f)(a_1,\ldots ,a_{k+1}):=\op{bar}\left(\sum_{i=1}^{k+1}a_i^2\nu_{x_i}\right)$$
where $\nu_{x_i}=\mu_{x_i}/\|\mu_{x_i}\|$ is the normalized Patterson-Sullivan measure at
$x_i$. We notice that $st_k(f)$ is determined by the (ordered) vertex set $V$, and we
denote $st_k(f)(\delta)$ by $st_V(\delta)$, for $\delta\in\Delta^k_s$.

When $M$ is rank one, the Patterson-Sullivan measures $\nu_{x_i}$ are fully supported on $\partial \til{M}$, and so are the linear combinations. By Lemma 4.4 of \cite{Knieper97} these measures have no atoms. By Lemma 2.4 and ensuing remarks of \cite{Knieper98} these measures are the boundary conditionals of the the measure $\mu$ of maximal entropy on $T^1M$. By Theorem 1.1 of \cite{Knieper98}, $\mu$ is supported on the rank one vectors. Since $\mu$ is ergodic, the uniformly recurrent rank one vectors have full $\mu$-measure. By the corresponding unstable version of Lemma 1.4 of Appendix 1 of \cite{Ballmann-Gromov-Schroeder} any uniformly recurrent vector $v$ has $\la^{-}_v>0$. Therefore we must have $\nu_x\left(\set{\theta\,:\, \la^-_\theta=0}\right)=0$. Hence by Lemma \ref{lem:strictly convex}, the barycenter map is well-defined, and so is the barycentric straightening. In order to show the positivity of the simplicial volume, we need to check the barycentric straightening is a straightening subordinated to some non-empty open set in the sense of Definition \ref{def:straightening}.

It is easy to see the barycentric straightening satisfies (a)-(c) of Definition \ref{def:straightening}. The proofs of these three properties are established  in \cite[Property (1)-(3)]{Lafont06b}, where both the proofs can be followed verbatim once their Patterson-Sullivan measures $\nu(x)$ integrated on the Furstenberg boundary is changed to our $\nu_x$ integrated on the entire ideal boundary. While that paper is concerned with the higher rank case, the proofs of those properties do not use that fact and work in the context of any Hadamard manifold once the above change is made and provided the barycenter exists and $\mc{B}_\nu$ is strictly convex, which we have already established for our family of measures.

To check (d) that the Jacobian of top dimensional straightened simplices are
uniformly bounded on some non-empty open set $U\subset M$, we make the following
estimate(which is also similar to \cite[Property (4)]{Lafont06b}).

For any $\delta=\sum_{i=1}^{n+1}a_ie_i\in \Delta_s^{n}$, $st_n(f)(\delta)$ is defined to be
the unique point where the function
$$x\mapsto \int_{\partial
	\til{M}}B(x,\theta)d\left(\sum_{i=1}^{n+1}a_i^2\nu_{x_i}\right)(\theta)$$
is minimized. Hence, by differentiating at that point, we get the 1-form equation
$$\int_{\partial\til{M}}dB_{(st_V(\delta),\theta)}(\cdot)d\left(\sum_{i=1}^{n+1}a_i^2\nu_{x_i}\right)(\theta)\equiv
0$$
which holds identically on the tangent space $T_{st_V(\delta)}\til{M}$.
Differentiating in a direction $u\in T_\delta(\Delta_s^n)$ in the source, one obtains the $2$-form equation

\begin{align}\label{eqn:2-form}
\begin{split}
&\quad\sum_{i=1}^{n+1}2a_i\langle
u,e_i\rangle_\delta\int_{\partial\til{M}}dB_{(st_V(\delta),\theta)}(w)d\nu_{x_i}(\theta)\\
&+\int_{\partial\til{M}}DdB_{(st_V(\delta),\theta)}(D_\delta(st_V)(u),w)d\left(\sum_{i=1}^{n+1}a_i^2\nu_{x_i}\right)(\theta)\equiv 0
\end{split}
\end{align}
which holds for every $u\in T_\delta(\Delta_n^k)$ and $w\in T_{st_V(\delta)}(\til{M})$.

Now we define two positive semidefinite symmetric endomorphisms $H_{\delta,V}$ and $K_{\delta,V}$
on $T_{st_V(\delta)}(\til{M})$:
$$\langle
H_{\delta,V}(w),w\rangle_{st_V(\delta)}=\int_{\partial\til{M}}\Big(dB_{(st_V(\delta),\theta)}(w)\Big)^2 d\left(\sum_{i=1}^{n+1}a_i^2\nu_{x_i}\right)(\theta)$$
$$\langle
K_{\delta,V}(w),w\rangle_{st_V(\delta)}=\int_{\partial\til{M}}DdB_{(st_V(\delta),\theta)}(w,w)d\left(\sum_{i=1}^{n+1}a_i^2\nu_{x_i}\right)(\theta)$$
Note from Lemma \ref{lem:strictly convex} that $K_{\delta,V}$ is positive definite for any vertex set $V$. From
Equation (\ref{eqn:2-form}), we obtain, for $u\in T_\delta (\Delta _s^n)$ a unit vector
and $w\in T_{st_V(\delta)}(\til{M})$ arbitrary, the following
\begin{equation}\label{eqn:Q1-bounds-Q2}
\begin{split}
|\langle K_{\delta,V}\Big(D_\delta(st_V)(u)\Big),w\rangle|&=|-\sum_{i=1}^{n+1}2a_i\langle
u,e_i\rangle_\delta\int_{\partial\til{M}}dB_{(st_V(\delta),\theta)}(w)d\nu_{x_i}(\theta)|
\\
& \leq \left(\sum_{i=1}^{n+1}\langle
u,e_i\rangle_\delta^2\right)^{1/2}\left(\sum_{i=1}^{n+1}4a_i^2\left(\int_{\partial\til{M}}dB_{(st_V(\delta),\theta)}(w)d\nu_{x_i}(\theta)\right)^2\right)^{1/2}\\
&\leq 2\left(\sum_{i=1}^{n+1}a_i^2\int_{\partial\til{M}} \Big(dB_{(st_V(\delta),\theta)}(w)\Big)^2 d\nu_{x_i}(\theta)\int_{\partial\til{M}}1\, d\nu_{x_i}\right)^{1/2}\\
&=2 \langle H_{\delta,V}(w),w\rangle^{1/2}
\end{split}
\end{equation}
via two applications of the Cauchy-Schwartz inequality.

For points $\delta\in\Delta_s^n$ where $st_V$ is nondegenerate, we now pick orthonormal
bases $\{u_1,\ldots ,u_n\}$ of $T_\delta(\Delta_s^n)$, and $\{v_1,\ldots ,v_n\}$ of
$T_{st_V(\delta)}(\til{M})$. We choose these so that $\{v_i\}_{i=1}^n$ are
eigenvectors of $H_{\delta,V}$, and $\{u_1,\ldots, u_n\}$ is the resulting basis obtained by
applying the orthonormalization process to the collection of pullback vectors
$\{(K_{\delta,V}\circ D_\delta(st_V))^{-1}(v_i)\}_{i=1}^n$. By the choice of bases, the
matrix $(\langle K_{\delta,V}\circ D_\delta(st_V)(u_i),v_j\rangle)$ is upper triangular, so
we obtain
\begin{align*}
|\det(K_{\delta,V})\cdot \Jac_\delta(st_V)| &=|\det(\langle K_{\delta,V}\circ
D_\delta(st_V)(u_i),v_j\rangle)|\\
& =|\prod_{i=1}^n\langle K_{\delta,V}\circ D_\delta(st_V)(u_i),v_i\rangle| \\
& \leq \prod_{i=1}^n 2 \langle H_{\delta,V}(v_i),v_i\rangle^{1/2} \\
&=2^n \det(H_{\delta,V})^{1/2}
\end{align*}
where the middle inequality is obtained via Equation (\ref{eqn:Q1-bounds-Q2}).
Hence we get the inequality
\begin{equation}\label{ineq:Jac}
|\Jac_\delta(st_V)|\leq 2^n\cdot \frac{\det(H_{\delta,V})^{1/2}}{\det(K_{\delta,V})}
\end{equation}
where
$$H_{\delta,V}=\int_{\partial\til{M}}\Big(dB_{(st_V(\delta),\theta)}\Big)^2 d\left(\sum_{i=1}^{n+1}a_i^2\nu_{x_i}\right)(\theta)$$
$$K_{\delta,V}=\int_{\partial\til{M}}DdB_{(st_V(\delta),\theta)}d\left(\sum_{i=1}^{n+1}a_i^2\nu_{x_i}\right)(\theta)$$
We remark that the above two positive semidefinite bilinear forms depend only on the point $st_V(\delta)\in \til{M}$ and the measure $\sum_{i=1}^{n+1}a_i^2\nu_{x_i}$ which arises from the coordinate of $\delta$ and the vertex set $V$. So it is convenient for us to denote
$$H_{x,\nu}=\int_{\partial\til{M}}\Big(dB_{(x,\theta)}\Big)^2 d\nu(\theta)$$
$$K_{x,\nu}=\int_{\partial\til{M}}DdB_{(x,\theta)}d\nu(\theta)$$
for any $x\in \til M$ and any probability measure $\nu$ on $\partial \til M$.

Thus we summarize the discussions in the following proposition.

\begin{prop}
Given $M$ a closed rank one manifold, if there exists a constant $C$ and a non-empty open set $U\subset M$ such that, for any probability measure $\nu$ fully supported on $\partial \til M$, and any $x\in \til M$ whose natural projection $p(x)\in U$, we have
$$\frac{\det(H_{x,\nu})^{1/2}}{\det(K_{x,\nu})}\leq C.$$
Then $||M||>0$.
\end{prop}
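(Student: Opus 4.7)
The plan is to reduce the proposition directly to Theorem \ref{thm:straightening} by verifying that under the stated hypothesis, the barycentric straightening constructed above is a straightening subordinated to the open set $U$ in the sense of Definition \ref{def:straightening}.

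Properties (a)--(c) of that definition have already been established in the discussion preceding the proposition (by adapting the arguments of \cite{Lafont06b} from the Furstenberg boundary to the full ideal boundary, which is valid once strict convexity of $\mathcal{B}_\nu$ and existence of the barycenter are in hand, as supplied by Lemma \ref{lem:strictly convex}). So the only remaining task is to verify the Jacobian bound (d).

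For this, suppose $f:\Delta_s^n\to \til{M}$ is any singular $n$-simplex with ordered vertex set $V=(x_1,\ldots,x_{n+1})$, and $\delta=\sum_{i=1}^{n+1}a_ie_i\in \Delta_s^n$ is a point with $st_n(f)(\delta)=st_V(\delta)\in p^{-1}(U)$. Set $\nu=\sum_{i=1}^{n+1}a_i^2\nu_{x_i}$. Since $\sum a_i^2=1$ and each $\nu_{x_i}$ is a probability measure fully supported on $\partial\til{M}$, the measure $\nu$ is again a probability measure fully supported on $\partial\til{M}$. Directly from the definitions of $H_{\delta,V},K_{\delta,V}$ and $H_{x,\nu},K_{x,\nu}$ given above, one has $H_{\delta,V}=H_{st_V(\delta),\nu}$ and $K_{\delta,V}=K_{st_V(\delta),\nu}$. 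Since $p(st_V(\delta))\in U$, the hypothesis yields
$$\frac{\det(H_{st_V(\delta),\nu})^{1/2}}{\det(K_{st_V(\delta),\nu})}\leq C,$$
and plugging this into inequality (\ref{ineq:Jac}) gives the uniform bound $|\Jac_\delta(st_V)|\leq 2^n C$, which is exactly property (d). Theorem \ref{thm:straightening} then delivers $\|M\|>0$.

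There is essentially no obstacle in this argument: the heavy lifting --- deriving inequality (\ref{ineq:Jac}), establishing well-definedness of the barycenter via Lemma \ref{lem:strictly convex}, and verifying $\Gamma$-equivariance and chain-homotopy --- has already been carried out in the preceding subsections. The proposition is a clean packaging statement expressing that whenever the ratio $\det(H)^{1/2}/\det(K)$ is uniformly bounded on the lift of some non-empty open set $U\subset M$, the abstract straightening machinery of Theorem \ref{thm:straightening} converts this analytic estimate into positivity of the simplicial volume. The only small subtlety worth noting during the write-up is that the measure $\sum a_i^2\nu_{x_i}$ is automatically a probability measure fully supported on $\partial\til{M}$, which is why the hypothesis on $\nu$ in the proposition can be applied to every $\delta\in\Delta_s^n$.
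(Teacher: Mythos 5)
Your proposal is correct and takes essentially the same route as the paper, whose stated proof is simply ``follows from Theorem \ref{thm:straightening} and inequality (\ref{ineq:Jac})''; you have just unpacked that one-liner by explicitly verifying that $\nu=\sum a_i^2\nu_{x_i}$ is a fully supported probability measure, matching $H_{\delta,V},K_{\delta,V}$ with $H_{x,\nu},K_{x,\nu}$, and confirming property (d) of Definition \ref{def:straightening}.
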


\begin{proof}
The proof simply follows from Theorem \ref{thm:straightening} and inequality (\ref{ineq:Jac}).
\end{proof}
\section{Proofs of Thereoms}

\begin{proof}[Proof of Theorem \ref{thm:general estimate}]

We use a similar idea to the proof of Theorem \ref{thm:straightening} except that we now pick as bump function the function $u^n(x)$ of Theorem \ref{thm:general estimate}. The theorem holds automatically when $u(x)\equiv 0$, so we can assume $u(p)>0$ for some $p\in M$. By Lemma \ref{lem:tr2-implies-rank1} and \ref{lem:strictly convex}, the barycentric straightening is well defined on the universal cover $\til{M}$. 

Let $\sum_{i=1}^la_i\sigma_i$ be any singular chain representing the fundamental class $[M]$ in the real coefficient, and $st(\sigma_i)$ be the barycentrically straightened simplex of $\sigma_i$. We have the following estimate
\begin{align}\label{equ:simpicial-lower-bound}
\int_Mu^n(x)dV &=\int_{[\sum a_i\sigma_i]}u^n(x)dV =\int_{[\sum a_ist(\sigma_i)]}u^n(x)dV \\
& \leq \sum_{i=1}^l |a_i|\cdot\left| \int_{st(\sigma_i)}u^n(x)dV\right| \\
& = \sum_{i=1}^l |a_i|\cdot\left| \int_{\Delta_s^n} u^n( st(\sigma_i)(\delta))|\Jac(st(\sigma_i))(\delta)|dV_s\right|  \label{eq:str-inequality}
\end{align}
If we denote $V_i$ the ordered vertex determined by $\sigma_i$, we obtain the following Jacobian estimate from inequality (\ref{ineq:Jac}),
\begin{equation}\label{eq:Jac-estimate}
|\Jac(st(\sigma_i))(\delta)|\leq 2^n\cdot \frac{\det(H_{\delta,V_i})^{1/2}}{\det(K_{\delta,V_i})}=2^n\cdot \frac{\det(H_{x,\nu})^{1/2}}{\det(K_{x,\nu})}
\end{equation}
where 
\begin{align*}
\langle
H_{x,\nu}(w),w\rangle_x&=\int_{\partial\til{M}}\Big(dB_{(x,\theta)}(w)\Big)^2 d\nu(\theta)\\
\langle
K_{x,\nu}(w),w\rangle_x&=\int_{\partial\til{M}}DdB_{(x,\theta)}(w,w)d\nu(\theta)
\end{align*}
with $x=st(\sigma_i)(\delta)$ and $\nu$ some probability measure fully supported on $\partial \til{M}$ depending on $\delta, V_i$. Although the dependency of $\nu$ is not specified, we will see in what follows that the estimate does not rely on $\nu$.

At the point $x$ where $u(x)>0$, we have matrix forms 
\begin{align*}
(dB_{(x,\theta)})^2&=\left[
   {\begin{array}{cc}
   1 & 0 \\
   0 & 0^{(n-1)}\\
\end{array} }
\right]\\
DdB_{(x,\theta)}&\geq u(x)\left[
   {\begin{array}{cc}
   0 & 0 \\
   0 & I^{(n-1)}\\
\end{array} }
\right]
\end{align*}
under a proper choice of orthonormal basis $e_1, e_2,..., e_n$ so that $e_1$ is the unit vector at $x$ pointing toward $\theta$. Therefore, we have for any $(x,\theta)$ with $u(x)>0$ that
$$(dB_{(x,\theta)})^2+\frac{1}{u(x)}DdB_{(x,\theta)}\geq I^{(n)}$$
Hence after integrating on any probability measure $\nu$, the following holds.
$$H_{x,\nu}+\frac{1}{u(x)}K_{x,\nu}\geq I^{(n)}$$
This inequality is independent on $\nu$. We now apply the following lemma from Besson-Courtois-Gallot \cite{Besson95} on $H_{x,\nu}$ and $\frac{1}{u(x)}K_{x,\nu}$.

\begin{lem} Let $H$ and $K$ be two $n\times n$ ($n\geq 3$) matrices, where $K$ is positive definite, and $H$ is positive semidefinite. If $H+K\geq I$ and $tr(H)=1$, then
$$\frac{\det(H)^{1/2}}{\det(K)}\leq \frac{n^{n/2}}{(n-1)^n}$$
\end{lem}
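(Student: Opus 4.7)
The plan is to first reduce the matrix inequality to a scalar inequality on the standard simplex, and then to identify the maximizer of the resulting function by Lagrange multipliers.

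For the reduction, $H + K \ge I$ is the same as $K \ge I - H$. If some eigenvalue of $H$ equals $1$, then $\operatorname{tr}(H) = 1$ together with $H \ge 0$ forces the remaining eigenvalues of $H$ to vanish, so $\det(H) = 0$ and the inequality is trivial. Otherwise the spectrum of $H$ lies in $[0, 1)$, $I - H > 0$, and monotonicity of $\det$ on the positive semidefinite cone gives $\det(K) \ge \det(I - H)$. Since $H$ and $I - H$ commute I may diagonalize them simultaneously; writing $h_1, \ldots, h_n \in [0, 1)$ for the eigenvalues of $H$, the problem reduces to proving
\[
\frac{(\prod_i h_i)^{1/2}}{\prod_i (1 - h_i)} \le \frac{n^{n/2}}{(n-1)^n}
\]
whenever $\sum_i h_i = 1$, with equality at the barycenter $h_i = 1/n$.

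For the optimization I would apply Lagrange multipliers to $F(h) := \sum_i [\tfrac12\log h_i - \log(1 - h_i)]$ on the open simplex. The critical equation reads $\phi(h_i) := 1/(2 h_i) + 1/(1 - h_i) = \lambda$. Since $\phi$ is strictly convex on $(0, 1)$ with a unique minimum at $h = \sqrt{2} - 1$, each of its level sets contains at most two points, so at any critical configuration the $h_i$ take at most two distinct values $a \ne b$. Eliminating $\lambda$ from $\phi(a) = \phi(b)$ simplifies to the identity $a + b + a b = 1$, and combining with the multiplicity constraint $k a + (n - k) b = 1$ (where $k$ is the multiplicity of $a$) yields the quadratic
\[
k a^2 + (2k - n - 1) a + (n - k - 1) = 0.
\]

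The main obstacle is to verify that this quadratic has no root in $(0, 1)$ for any $k \in \{1, \ldots, n - 1\}$ when $n \ge 3$. For $k = 1$ the roots are $\{1, n - 2\}$ and for $k = n - 1$ they are $\{0, -(n-3)/(n-1)\}$, all on or outside $[0, 1]$; for $2 \le k \le n - 2$ the discriminant $8 k^2 - 8 k n + (n + 1)^2$ is nonnegative only when $k$ lies outside the open interval $((n+1)/4,\, (n+1)/2)$, whereas the vertex $(n + 1 - 2k)/(2k)$ of the quadratic lies in $(0, 1)$ exactly for $k$ inside that same interval, so in every case any real root is outside $(0, 1)$. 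Consequently the barycenter $h_i = 1/n$ is the unique interior critical point. A direct computation shows the Hessian of $F$ restricted to the tangent space $\{\sum \delta_i = 0\}$ at that point is the scalar $(-n^2/2 + n^2/(n-1)^2)$ times the identity, which is negative for $n \ge 3$, so the barycenter is a strict local maximum. A brief boundary analysis (along any sequence approaching the boundary of the simplex $F$ tends to $-\infty$, except in the residual case $n = 3$ with a single $h_i \to 1$, where a direct calculation shows the limit is bounded by $-\log 2 < F(1/n, \ldots, 1/n)$) upgrades this to a global maximum, completing the proof.
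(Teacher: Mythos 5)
The paper itself does not prove this lemma: it simply cites it from Besson--Courtois--Gallot \cite{Besson95} and moves on. Your argument therefore supplies a self-contained elementary proof where the paper gives none, following the natural route: reduce to $\det(K)\ge\det(I-H)$ via monotonicity of $\det$ on the positive cone, diagonalize, and solve the constrained optimization on the simplex by Lagrange multipliers. All the computational claims check out: $\phi(a)=\phi(b)$ with $a\ne b$ does simplify to $a+b+ab=1$; the quadratic $q(a)=ka^2+(2k-n-1)a+(n-k-1)$ is correct; the roots for $k=1$ and $k=n-1$ are $\{1,n-2\}$ and $\{0,-(n-3)/(n-1)\}$; the discriminant is $8k^2-8kn+(n+1)^2$, and indeed $D\big(\tfrac{n+1}{4}\big)=\tfrac{(n+1)(3-n)}{2}\le0$ and $D\big(\tfrac{n+1}{2}\big)=(n+1)(3-n)\le0$ so by strict convexity $D<0$ on the open interval $\big(\tfrac{n+1}{4},\tfrac{n+1}{2}\big)$; the Hessian entry $-\tfrac{n^2}{2}+\tfrac{n^2}{(n-1)^2}$ is negative exactly when $n\ge3$; and $\log\!\big(3\sqrt3/8\big)>-\log 2$ handles the $n=3$ boundary corner.

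There is one step you leave implicit that should be written out. In the range $2\le k\le n-2$, you conclude from ``vertex $\notin(0,1)$'' that any real roots of $q$ lie outside $(0,1)$. By itself that inference is not valid: a parabola with vertex at $a^*\le0$ could still have its larger root in $(0,1)$. What rescues it is the endpoint positivity $q(0)=n-k-1>0$ and $q(1)=2(k-1)>0$, which hold throughout $2\le k\le n-2$ and guarantee that neither $0$ nor $1$ lies strictly between the roots; combined with the vertex being outside $(0,1)$ this forces both roots outside $[0,1]$. This is a one-line addition, not a structural gap, but as written the deduction is incomplete. With that supplied, the proof is correct and, for $n\ge 3$ as required, gives exactly the stated bound with equality at $H=\tfrac1n I$.
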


Thus, we obtain from inequality (\ref{equ:simpicial-lower-bound})-(\ref{eq:Jac-estimate}) the following
\begin{align*}
\int_Mu^n(x)dV &\leq 2^n \frac{n^{n/2}}{(n-1)^n} vol(\Delta_s^n)(\sum_{i=1}^l|a_i|)\\
&= \frac{n^{n/2}\omega_n}{2(n-1)^n}\sum_{i=1}^l|a_i|
\end{align*}
where $\omega_n$ is the volume of a unit $n$-sphere.

By taking the infimum over all fundamental class representatives $\sum a_i\sigma_i$, we have
$$||M||\geq \frac{2(n-1)^n}{n^{n/2}\omega_n}\int_M u^n(x)dV$$

\end{proof}

\begin{proof}[Proof of Theorem \ref{thm:dim3}]
We show $u(x)$ cannot be identically zero under the assumption of negative Ricci curvature, hence by Corollary \ref{cor:positive-under-rank-condition} the simplicial volume is positive.

Assume not, $u(x)=0$ for all $x\in M$, that is, there exists a higher $\op{rank}^+$ unit vector $v_x$ at every point $x$ on $M$. Since $\op{Ric}(v_x, v_x)$ is negative, $v_x$ can only be $\op{rank}^+$ two, that is, there exists a unique (up to sign) unit vector $u_x\perp v_x$ that is parallel along the geodesic ray formed by $v_x$. In particular, the plane spanned by $u_x$ and $v_x$ has curvature zero. Note that the zero curvature plane has to be unique among the Grassmanian two planes of $T_xM$, because if not, we can take any unit vector $w$ in the intersection of two zero curvature planes, and it is clear that $\op{Ric}(w,w)=0$ as $M$ is nonpositively curved, this contradicts the Ricci curvature condition. 

We denote on each tangent space $T_xM$ an orthonormal frame $e_1=v_x$, $e_2=u_x$, and $e_3$ orthogonal to the zero curvature plane. We may assume the zero curvature plane field is orientable by possibly passing to a suitable double cover of $M$. This gives a smooth vector field $e_3$ after a preferable choice of normal direction of the plane field, which at each point corresponds to the unique minimal Ricci curvature direction. (Note that $e_1, e_2$ might not be global vector fields.) We now apply the following Bochner's techique on $e_3$.

\begin{lem} (Bochner \cite{Bochner46}, see also \cite[Chapter 7-Exercise 6] {Peterson06}) 
If $X$ is a smooth vector field on a closed manifold $M$, then
$$\int_M \op{Ric}(X,X)dV=\int_M (\op{div} X)^2-\op{Tr}(\nabla X\circ \nabla X) dV$$
\end{lem}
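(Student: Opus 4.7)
The plan is to derive the identity via the divergence theorem on the closed manifold $M$, letting the Ricci tensor appear through the commutator of second covariant derivatives applied to $X$.

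Specifically, I would introduce the auxiliary vector field
\[
Y = \nabla_X X - (\op{div} X)X
\]
and compute its divergence at a point using an orthonormal frame $\{e_i\}$. Unpacking $\nabla_X X$ in coordinates as $X^j\nabla_j X^i$, the divergence of the first summand produces $(\nabla_i X^j)(\nabla_j X^i) + X^j\nabla_i\nabla_j X^i$. The first of these is exactly $\op{Tr}(\nabla X\circ\nabla X)$. In the second, I would commute the two covariant derivatives using the Ricci identity
\[
\nabla_i\nabla_j X^i - \nabla_j\nabla_i X^i = R^{i}{}_{kij}X^{k} = \op{Ric}(X,e_j),
\]
which yields $\langle X,\nabla(\op{div} X)\rangle + \op{Ric}(X,X)$. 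Meanwhile $\op{div}((\op{div} X)X) = \langle X,\nabla(\op{div} X)\rangle + (\op{div} X)^2$, so the gradient terms cancel against each other and one is left with
\[
\op{div}(Y) = \op{Tr}(\nabla X\circ\nabla X) + \op{Ric}(X,X) - (\op{div} X)^2.
\]

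The proof is then completed by the divergence theorem on the closed manifold $M$, which forces $\int_M \op{div}(Y)\,dV = 0$, and rearranging gives exactly the claimed identity.

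There is no real obstacle here: the single nontrivial input is the Ricci commutator identity for a vector field, and the rest is routine manipulation of divergences and an application of Stokes' theorem in the form of the divergence theorem. This is consistent with the authors invoking the statement as a classical fact from Bochner's 1946 paper, with a modern exposition available as an exercise in Petersen's \emph{Riemannian Geometry}, Chapter 7, rather than giving a proof in the body of the paper.
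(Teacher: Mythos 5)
Your proof is correct, and it is the standard derivation of Bochner's identity for vector fields: compute $\op{div}\bigl(\nabla_X X - (\op{div} X)X\bigr)$ in an orthonormal frame, use the Ricci commutation identity $\nabla_i\nabla_j X^i - \nabla_j\nabla_i X^i = \op{Ric}(X,e_j)$ to produce the Ricci term, observe that the $\langle X,\nabla(\op{div} X)\rangle$ contributions cancel, and integrate over the closed manifold. The paper gives no proof of this lemma, citing it as a classical fact from Bochner's 1946 paper and Petersen's textbook exercise, so there is nothing to compare against; your argument fills in exactly the computation those references supply.
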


If we set $X=e_3$ in the lemma above, we can compute $\op{div} X$ and $\op{Tr}(\nabla X\circ \nabla X)$ at each point using any choice of orthonormal basis. Indeed, if we fix a point $x\in M$, and pick an orthonormal frame $e_1$, $e_2$ and $e_3$ on $T^1M$ as above, we have
\begin{equation*}
\nabla_{e_1}e_3=0 \qquad\quad \nabla_{e_2}e_3=a_{21}e_1+a_{22}e_2 \quad\qquad \nabla_{e_3}e_3=a_{31}e_1+a_{32}e_2
\end{equation*}
as the $e_1$ and $e_2$ fields are parallel along the geodesic ray tangent to $e_1$, so is their orthogonal complement $e_3$. This implies
$$\op{div} X=<\nabla_{e_1}e_3,e_1>+<\nabla_{e_2}e_3,e_2>+<\nabla_{e_3}e_3,e_3>=a_{22}$$
and
\begin{align*}
\op{Tr}(\nabla X\circ \nabla X)&=<\nabla_{\nabla_{e_1}e_3}e_3,e_1>+<\nabla_{\nabla_{e_2}e_3}e_3,e_2>+<\nabla_{\nabla_{e_3}e_3}e_3,e_3>\\
&=0+a_{22}^2+0=a_{22}^2
\end{align*}
Hence the integrand of the right hand side of the identity in the above lemma is always $0$, but the left hand side is strictly negative. This gives a contradiction.
\end{proof}

\begin{proof}[Proof of Theorem \ref{thm:pointwise-k-ricci}]
We notice that the Ricci condition is an open condition, that is, there exists $\epsilon_0>0$ and an open set $U\subset M$, such that $\op{Ric}_{\floor{\frac{n}{4}}+1}(v_x, v_x) < -\epsilon_0$ for all $v_x\in T^1_xM$, and for all $x\in U$. We apply the following estimate.

\begin{thm}[\cite{CW17} Theorem 6] \label{thm:Jacobian estimate} Suppose $M$ is a closed non-positively curved
manifold with negative $(\floor{\frac{n}{4}}+1)$-Ricci curvature, and
$\til{M}$ is its Riemannian universal cover. Let $x\in \til{M}$, $\theta\in \partial
\til{M}$, and $\nu$ be any probability measure that has full support on $\partial
\til{M}$. Then
	there exists a universal constant $C$ that only depends on $(M,g)$, so that
	$$\frac{\det(\int_{\partial
	\til{M}}(dB_{(x,\theta)}(\cdot))^2 d\nu(\theta))^{1/2}}{\det(\int_{\partial
	\til{M}}DdB_{(x,\theta)}(\cdot,\cdot)d\nu(\theta))}\leq C$$
\end{thm}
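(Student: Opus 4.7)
The plan is to prove the estimate pointwise in $x\in \til{M}$ and uniformly in the probability measure $\nu$; since $H_{x,\nu}$ and $K_{x,\nu}$ are $\Gamma$-equivariant and $M$ is compact, a pointwise bound automatically yields the claimed uniform constant $C = C(M,g)$. The argument would combine a Riccati-based geometric estimate, converting the $(\floor{n/4}+1)$-Ricci hypothesis into spectral bounds for the horospherical shape operators $U(\theta) := DdB_{(x,\theta)}$, with a matrix determinant inequality extending the Besson-Courtois-Gallot calculation.

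For the setup, $H_{x,\nu}$ and $K_{x,\nu}$ are both positive semidefinite; $\op{Tr}(H_{x,\nu}) = 1$ because $|dB_{(x,\theta)}|\equiv 1$; and $K_{x,\nu}$ is strictly positive definite by Lemma \ref{lem:strictly convex}. For each $\theta$, $U(\theta)$ is the stable solution of the Riccati equation $U' + U^2 + R(\cdot,\gamma_\theta')\gamma_\theta' = 0$ along the Busemann geodesic $\gamma_\theta$, and it converges at infinity to $DdB_{(x,\theta)}$. Restricting this equation to the $k$-dimensional subspace of $v_{x,\theta}^{\perp}$ on which the curvature operator $R(v_{x,\theta},\cdot)v_{x,\theta}$ is least negative, the hypothesis $\op{Ric}_{k}(v,v) < -\epsilon_{0}$ with $k = \floor{n/4}+1$ (uniform by compactness of $T^{1}M$), combined with Cauchy-Schwarz on the trace-square term, delivers a uniform lower bound on $\op{Tr}_{k}(U(\theta))$. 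Equivalently, at most $\floor{n/4}$ eigenvalues of $U(\theta)|_{v_{x,\theta}^{\perp}}$ can be arbitrarily small, uniformly in $x$ and $\theta$.

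The main step, and the principal obstacle, is an algebraic determinant inequality: given positive semidefinite matrices $H, K$ on $T_{x}\til{M}$ with $\op{Tr}(H)=1$, and with $K = \int U(\theta)\,d\nu(\theta)$ an average of integrands each subject to the preceding eigenvalue constraint, the ratio $\det(H)^{1/2}/\det(K)$ is bounded by $C(n,\delta)$. I expect this to follow from a refinement of the BCG argument already reproduced in the derivation of (\ref{ineq:Jac}): work in an eigenbasis of $H$, invoke the Cauchy-Schwarz bound in (\ref{eqn:Q1-bounds-Q2}) between $H$ and $K$, and apply AM-GM to the eigenvalues of $H$ under the trace constraint. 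The specific threshold $\floor{n/4}+1$ arises from the balance point in the exponent arithmetic — one needs the $n-\floor{n/4}$ guaranteed large eigenvalues of $K$ to overcome the factor of $n/2$ in $\det(H)^{1/2}$, with the remaining small eigenvalues absorbed via the compatibility between $H$ and $K$ enforced by Cauchy-Schwarz. Once the pointwise inequality is proved, $\Gamma$-equivariance and compactness of $M$ close the argument.
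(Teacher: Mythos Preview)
The paper does not prove this statement: it is quoted verbatim as Theorem~6 of \cite{CW17} and used as a black box inside the proof of Theorem~\ref{thm:pointwise-k-ricci}. The only additional information the paper extracts is the remark that the constant $C$ depends only on $n$, $\|R\|$, $\|\nabla R\|$, $\|\nabla^2 R\|$, and the $(\floor{n/4}+1)$-Ricci constant at the point $x$ --- i.e., that the estimate is genuinely pointwise. So there is no in-paper proof to compare your proposal against.

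That said, your outline does follow the broad architecture of the argument in \cite{CW17}: Riccati analysis to pass from the $k$-Ricci hypothesis to spectral control on the horospherical shape operator $DdB_{(x,\theta)}$, followed by a determinant inequality of BCG type. Two cautions. First, your description ``restricting the Riccati equation to the $k$-dimensional subspace on which the curvature operator is least negative'' is not literally correct: the relevant subspace varies along the geodesic and is not invariant under $U(\theta)$, so the matrix Riccati equation does not restrict cleanly; the actual argument requires more care (and is where the dependence on $\|\nabla R\|$ and $\|\nabla^2 R\|$ enters). Second, the algebraic step is not the standard BCG lemma applied to $H$ and a rescaled $K$ as in the proof of Theorem~\ref{thm:general estimate}; one must handle the situation where $K$ has up to $\floor{n/4}$ eigenvalues with no a priori lower bound, and this is where the threshold $\floor{n/4}+1$ is genuinely sharp. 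Your exponent-arithmetic heuristic is on the right track but is not itself a proof.
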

Moreover, from the proof of Theorem 6 in \cite{CW17} we see that the above inequality is a pointwise estimate, that is, $C$ only depends on the dimension $n$, the norm of the curvature operators $\|R\|$, $\|\nabla R\|$, $\|\nabla^2 R\|$, and the corresponding $(\floor{\frac{n}{4}}+1)$-Ricci constant at $x$. Therefore, we see that the barycentric straightening is a straightening subordinated to $U$ under the curvature assumptions: the Jacobian estimate simply follows from inequality (\ref{ineq:Jac}) and the theorem above. Applying this to $\nu=\sum_{i=1}^{n+1}a_i^2\nu_{x_i}$, we have the following inequalities:
\begin{align*}
|\op{Jac}(st_n(f))(\delta)|&\leq 2^n\frac{\det(\int_{\partial
	\til{M}}(dB_{(x,\theta)}(\cdot))^2d\nu(\theta))^{1/2}}{\det(\int_{\partial
	\til{M}}DdB_{(x,\theta)}(\cdot,\cdot)d\nu(\theta))}\\
&\leq 2^nC
\end{align*}
whenever $st_n(f)(\delta)\in p^{-1}(U)$. Hence by Theorem \ref{thm:straightening}, the simplicial volume of $M$ is positive.

\end{proof}

\section{Concluding remarks}
In addition to their independent interest, both Corollary \ref{cor:positive-under-rank-condition} and Theorem \ref{thm:pointwise-k-ricci} represent progress towards Gromov's conjecture, where the former addresses conditions on the rank and the latter does on the curvature. The two conditions do not have a direct implication between them. Indeed, a rank one vector $v$ may have zero Ricci curvature, hence $\op{Ric}_k$ is also zero at $v$. On the other hand, a higher rank vector $v$ only sees a dimension two zero curvature plane through it, which is strictly weaker then $\op{Ric}_k(v,v)=0$ when $k>2$.

To summerize, if we denote by $\mathcal{M}^n$ the collection of all $n$-dimensional rank one manifolds that are not covered by our theorems (the higher rank case is clear by Theorem \ref{thm:rank-rigidity} and \cite{Lafont06b}), then we have for every $M\in \mathcal{M}^n$, $M$ satisfies:

\begin{enumerate}
	\item $M$ has nonpositive curvature, negative Ricci curvature, and is rank one.
	\item Every point on $M$ has a higher $\op{rank}^+$ vector.
	\item Every point on $M$ has a vector that has $\op{Ric}_{\floor{\frac{n}4}+1}=0$.
\end{enumerate}

It is worth pointing out that we are not aware of any such examples. Therefore one possible approach to verify Gromov's conjecture is showing $\mathcal{M}^n=\emptyset$. In fact, in Theorem \ref{thm:dim3}, we showed $\mathcal{M}^3=\emptyset$ by contradicting (a) with (b). However, our proof relies heavily on the low dimensional constraints that at each point the higher rank vectors give unique zero curvature planes. In higher dimensions, we believe that the dynamical properties of the geodesic flow of rank one manifolds (such as the distribution of regular/singular points at boundary at infinity) hold the key to providing a complete answer to this problem.

Motivated by these discussions, we offer the strengthened version of Gromov's conjecture.

\begin{conj}
	Let $M$ be a closed manifold admitting a metric of non-positive curvature with a point $x\in M$ of negative Ricci-curvature. Then $M$ has positive simplicial volume. 
\end{conj}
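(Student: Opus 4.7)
The plan is to mimic the proofs of Theorems \ref{thm:dim3} and \ref{thm:pointwise-k-ricci} while localizing the argument to the open set $U \subset M$ on which Ricci is negative definite (such a $U$ is nonempty by openness of the Ricci condition). The goal is to verify that the barycentric straightening is subordinated to $U$ in the sense of Definition \ref{def:straightening}, and then apply Theorem \ref{thm:straightening}.

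First I would dispose of the higher rank case using Theorem \ref{thm:rank-rigidity}. If $\til M$ is a higher rank irreducible symmetric space of noncompact type, then it is Einstein with negative Einstein constant, Ricci is negative at every point, and positivity of $\|M\|$ follows from Lafont--Schmidt \cite{Lafont06b}. If $\til M$ splits as $\til{M}_1 \times \til{M}_2$, then negative definite Ricci at a lift $(\til x_1, \til x_2)$ forces each factor to have negative definite Ricci at $\til x_i$, and one would proceed by induction on dimension together with a product/commensurability analysis of the $\Gamma$-action in order to deduce positivity of $\|M\|$ from positivity of the simplicial volumes of the factors.

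After this reduction, $M$ is rank one. Then the barycentric straightening is well defined on $\til M$ and properties (a)--(c) of Definition \ref{def:straightening} hold by the discussion in Section 2. Property (d) amounts to producing a constant $C$ so that
\[
\frac{\det(H_{x,\nu})^{1/2}}{\det(K_{x,\nu})} \leq C
\]
uniformly over $x \in p^{-1}(U)$ and probability measures $\nu$ fully supported on $\partial \til M$, at which point Theorem \ref{thm:straightening} closes the argument.

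The hard part will be this Jacobian bound. From only $\op{Ric}(v,v) < 0$ at a point, neither the Besson--Courtois--Gallot hypothesis $H + K \geq I$ from Theorem \ref{thm:general estimate} (which effectively requires $\op{Tr}_2 DdB_v > 0$) nor the $(\floor{n/4}+1)$-Ricci estimate of Theorem \ref{thm:pointwise-k-ricci} is automatic. A plausible route is to combine rank one dynamics with the concentration of Patterson--Sullivan-type measures on boundary points of positive lower Lyapunov exponent (Lemma \ref{lem:la-boundary}); at such $\theta$ the Hessian $DdB_{(x,\theta)}$ is positive on almost all of $v_{x,\theta}^\perp$, which ought to yield an integrated lower bound on $K_{x,\nu}$ independent of $\nu$. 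Converting pointwise negative Ricci into such a uniform, measure-independent spectral estimate, without recourse to stronger pointwise rank or $k$-Ricci conditions, is the essential new ingredient required and, in my view, the crux of the conjecture.
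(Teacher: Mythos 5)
The statement you are addressing is presented in the paper as a \emph{conjecture}, not a theorem; the authors explicitly offer it as a strengthened form of Gromov's conjecture and give no proof. So there is no argument in the paper against which to compare your proposal, and your proposal is, as you yourself acknowledge in your final sentence, not a proof but a plan with a hole in the middle. That assessment is accurate and the hole is exactly where you put it: after the rank-one reduction, neither Jacobian estimate in the paper applies under the hypothesis of negative Ricci at a single point. The Besson--Courtois--Gallot mechanism behind Theorem \ref{thm:general estimate} requires $H_{x,\nu} + u(x)^{-1}K_{x,\nu} \geq I$, which in turn needs $\op{Tr}_2 DdB_v > 0$ for \emph{every} $v \in T^1_xM$, i.e.\ every unit vector at $x$ must be $\op{rank}^+$ one; negative Ricci at $x$ does not force this (a vector can be higher $\op{rank}^+$, so $\op{Tr}_2 DdB_v = 0$, and still have $\op{Ric}(v,v) < 0$, which is precisely the configuration the paper contemplates in its Section 4 discussion of $\mathcal{M}^n$). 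Theorem \ref{thm:pointwise-k-ricci} requires the far stronger $(\floor{n/4}+1)$-Ricci condition. Lemma \ref{lem:strictly convex} gives only strict positive-definiteness of $K_{x,\nu}$, not the uniform lower bound over all fully supported $\nu$ that property (d) of Definition \ref{def:straightening} demands; converting full-measure positivity of $\la^-_\theta$ into such a $\nu$-uniform spectral bound is genuinely new work not supplied by anything in this paper.

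A smaller remark on your reduction: in the product case $\til M = \til M_1 \times \til M_2$, inheriting negative Ricci in each factor is correct, but to push positivity of $\|M\|$ down to the factors you must rule out irreducible lattices. For non-symmetric products this follows from the Eberlein/Gromoll--Wolf structure theory (irreducible cocompact actions on non-flat nonpositively curved products force symmetry), after which a finite cover of $M$ is a product and Gromov's product inequality $\|M_1 \times M_2\| \geq c_n \|M_1\|\cdot\|M_2\|$ applies; the flat-factor subcase is already excluded by your Ricci hypothesis. This is bookkeeping rather than a gap, but it should be made explicit. In short, your reduction to the rank-one case is essentially sound, and the missing ingredient is exactly the one you name; no proof of this conjecture exists in the paper.
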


\def\polhk#1{\setbox0=\hbox{#1}{\ooalign{\hidewidth
  \lower1.5ex\hbox{`}\hidewidth\crcr\unhbox0}}} \def\cprime{$'$}
\providecommand{\bysame}{\leavevmode\hbox to3em{\hrulefill}\thinspace}
\providecommand{\MR}{\relax\ifhmode\unskip\space\fi MR }
\providecommand{\MRhref}[2]{%
  \href{http://www.ams.org/mathscinet-getitem?mr=#1}{#2}
}
\providecommand{\href}[2]{#2}

\end{document}